\newtheorem{theorem}{Theorem}
\newtheorem{acknowledgement}[theorem]{Acknowledgement}
\newtheorem{algorithm}[theorem]{Algorithm}
\newtheorem{lemma}[theorem]{Lemma}
\newtheorem{notation}[theorem]{Notational Convention}
\newtheorem{remark}[theorem]{Remark}
\begin{document}

\title[Rate of convergence]{Rate of convergence: the packing and centered Hausdorff measures of
totally disconnected self-similar sets}

\author{Marta Llorente}
\address{Marta Llorente: Departamento de
 An\'{a}lisis Econ\'{o}mico: Econom\'{i}a Cuantitativa\\Universidad Aut\`{o}noma de Madrid, Campus de Cantoblanco, 28049 Madrid \\  Spain\\ }
\email{m.llorente@uam.es }

\author{M. Eugenia Mera}
\address{M${{}^a}$ Eugenia Mera: Departamento An\'{a}lisis Econ\'{o}mico I\\
Universidad Complutense de Madrid \\
Campus de Somosaguas, 28223 Madrid, Spain\\}
\email{mera@ccee.ucm.es }
\author{Manuel Mor\'{a}n}
\address{Manuel Mor\'{a}n: Departamento An\'{a}lisis Econ\'{o}mico I\\
Universidad Complutense de Madrid \\
Campus de Somosaguas, 28223 Madrid, Spain\\}
\email{mmoranca@ccee.ucm.es }

\thanks{This research was supported by the Complutense University of Madrid and
Santander Universidades, research project  940038.}
\subjclass[2000]{Primary 28A75, 28A80} \keywords{packing measure,
centered Hausdorff measure, self-similar sets, computability of
fractal measures, rate of convergence}
\date{}

\maketitle

\begin{abstract}
In this paper we obtain the rates of convergence of the algorithms
given in \cite{[LLM3]} and \cite{[LLM4]}  for an automatic
computation of the centered Hausdorff and packing measures of a
totally disconnected self-similar set. We evaluate these rates
empirically through the numerical analysis of three standard
classes of self-similar sets, namely, the families of Cantor type
sets in the real line and the plane and the class of Sierpinski
gaskets. For these three classes and for small contraction ratios,
sharp bounds for the exact values of the corresponding measures
are obtained and it is shown how these bounds automatically yield
estimates of the corresponding measures, accurate in some cases to
as many as 14 decimal places. In particular, the algorithms
accurately recover the exact values of the measures in all cases
in which these values are known by geometrical arguments. Positive
results, which confirm some conjectural values  given in
\cite{[LLM3]} and \cite{[LLM4]} for the measures, are also
obtained for an intermediate range of larger contraction ratios.
We give an argument showing that, for this range of contraction
ratios, the problem is inherently computational in the sense that
any theoretical proof, such as those mentioned above, might be
impossible, so that in these cases, our method is the only
available approach. For contraction ratios close to those of the
connected case our computational method becomes intractably time
consuming, so the computation of the exact values of the packing
and centered Hausdorff measures in the general case, with the open
set condition, remains a challenging problem.
\end{abstract}

\section{Introduction\label{intro}}

The present paper is part of a program aimed at finding a method for the
automatic computation of metric measures, such as the packing or Hausdorff
measure, of a given fractal set. In particular, we obtain the rates of
convergence of the algorithms given in \cite{[LLM3]} and \cite{[LLM4]} for
computing the centered Hausdorff and packing measures, respectively, of a
totally disconnected self-similar set. It is important to note that,
although the convergence of these algorithms was shown in \cite{[LLM3]} and
\cite{[LLM4]} without establishing their rates of convergence, the outputs
of the algorithms were still useful for obtaining conjectural values of the
measures. Using results presented in this note we can prove these
conjectures (see Sections \ref{packex} and \ref{HCex}).

Recall that a totally disconnected self-similar set associated to a system $%
\Psi =\{f_{1,}f_{2,\dots ,}f_{m}\}$ of contracting similitudes of $\mathbb{R}%
^{n}$ is a compact non-empty set $E$ $\subset\mathbb{R}^{n}$\ such that $%
E=\bigcup_{i=1}^{m}f_{i}(E)$ and satisfying

\begin{equation}
f_{i}(E)\cap f_{j}(E)=\emptyset \,\,\forall \,\,i\neq j,\qquad i,j\in
\{1,\dots ,m\}=:M.  \label{SSC}
\end{equation}
The last condition implies the \textit{Open Set Condition (OSC, see \cite%
{[HUTCH]})}, and it is known as the \emph{Strong Separation Condition (SSC)}%
. Throughout the paper we assume the system $\Psi $ satisfies the \emph{SSC}
and write
\begin{equation}
c:=\min_{\substack{ i,j\in M  \\ i\neq j}}d_{\inf }(f_{i}(E),f_{j}(E))>0,
\label{c}
\end{equation}
where $d_{\inf }(f_{i}(E),f_{j}(E))$ is the distance between $f_{i}(E)$\ and
$f_{j}(E)$. The \emph{similarity ratio} of $f_{i}\in \Psi $ is denoted by $%
r_{i}\in (0,1)$, and we write
\begin{equation}
r_{\min }:=\min_{i=1,\ldots ,m}r_{i\text{ }}\qquad \text{and \qquad }r_{\max
}:=\max_{i=1,\ldots ,m}r_{i}\text{.}  \label{rminmax}
\end{equation}

Both algorithms are based on the self-similar tiling principle stated in
\cite{[MO]} for self-similar sets satisfying the OSC. In \cite{[MO]} it was
shown that if $B$ is any closed subset (or tile) of a self-similar set $E$
such that $\mu (B)>0$, where $\mu $ is the invariant measure (see \eqref{mu}%
), then $E$ can be tiled, without any loss of $\mu $-measure, by a countable
collection of tiles that are images of $B$ under similitudes. Recall from
\cite{[HUTCH]} that, for self-similar sets satisfying the OSC, the measure $%
\mu $ is a multiple of any scaling measure, and in particular of the
packing, Hausdorff, or centered Hausdorff measure.

Taking an appropriate initial tile $B$ (one with minimal spherical density
in the case of the packing measure, and with maximal spherical density in
the case of the Hausdorff measure; see Remark \ref{rough idea}) we obtain
both an optimal packing or covering \cite{[LLM1]}, and the exact value of
the corresponding measure. Our method requires a particular form of the
separation condition, the SSC, in order to make the computation of the
metric measures feasible (see Section \ref{conclusions} for a discussion of
the computability of metric measures satisfying the OSC).

Metric measures suitable for studying the size of sets of Lebesgue measure
zero in $\mathbb{R}^{n}$, such as the Hausdorff, packing, and centered
Hausdorff measure ($H^{s}$, $P^{s}$, and $C^{s}$, respectively) have been
studied intensively in recent years. However, the challenging problem of
finding systematic methods for computing the values of these measures for a
general fractal set remains open. Much effort has been made in this
direction, and exact values and bounds for the measures of some fractal sets
are known already (see \cite{[Ay]}-\cite{[JZZ]}, \cite{[LLM1]}-\cite{[LLM4]}%
, \cite{[ME]}, \cite{[MO]}, \cite{[TTC]}, \cite{[Zh]}, and the references
therein).

In this direction, the algorithms presented in \cite{[LLM3]} and \cite%
{[LLM4]} can be seen as the first steps towards the systematic computation
of the centered Hausdorff and packing measures of a self-similar set. These
algorithms yield estimates of the corresponding measures for a wide class of
self-similar sets, taking as input the list of contracting similitudes
associated with the given set. It is important to note that in some cases,
such as for the class of Sierpinski gaskets with dimension less than or
equal to one, the packing measure algorithm has been useful not only for
estimating the value of $P^{s}$ on each particular set in the class, but
also for finding a formula for the packing measure of an arbitrary member of
the class. As shown in \cite[Theorem~2]{[LLM4]}, the information provided by
the algorithm can then be used to prove the formula. However, in many other
cases, such as for some plane self-similar sets of dimension greater than
one, the absence of the corresponding formula means that it is desirable to
know the accuracy of the numerical results obtained from the algorithms. In
\cite{[LLM3]} the centered Hausdorff measure algorithm was implemented for
some sets whose centered Hausdorff measures were available in the literature
and some other sets whose centered Hausdorff measures were still unknown. It
is remarkable that, in the first case, the optimal values were attained at
early iterations and, in the second case, the algorithm yielded conjectural
values that could be proved with the methods developed in \cite{[LLM4]}.
However, the rate of convergence of neither algorithm was known. This is the
problem we solve in this paper and the content of the next two main theorems.

\begin{theorem}
\label{main}Suppose that the system $\Psi =\{f_{1},\dots,f_{m}\}$ satisfies
the SSC. Then, for every $k\in \mathbb{N}^{+}$\textbf{\ }such that\textbf{\ }%
$c-3Rr_{\max }^{k}-2Rr_{\max }^{k+1}>0$\textbf{\ }and every $\tilde{M}_{k}$
as in \eqref{MK} there holds \textbf{\ }
\begin{equation}
|P^{s}(E) -\tilde{M}_{k}|\leq \varepsilon _{k}\text{,}  \label{gnrlbdd}
\end{equation}
where%
\begin{equation*}
\varepsilon _{k}:=\frac{s2^{s+1}RQ}{r_{\min }^{sq_{k}}}r_{\max }^{k},
\end{equation*}
$s=\dim _{H}(E)$, $q_{k}\in \mathbb{N}^{+}$ is such that $Rr_{\max
}^{q_{k}}\leq c-R2r_{\max }^{k+1}-2Rr_{\max }^{k}<Rr_{\max }^{q_{k}-1}$, and
\begin{equation}
Q:=\left\{
\begin{array}{c}
\left( \frac{c}{r_{\min }}\right) ^{s-1}\text{ if }s\geq 1, \\
\\
\left( c-2Rr_{\max }^{k}-2Rr_{\max }^{k+1}\right)^{s-1}\text{ if }s<1.%
\end{array}
\right.  \label{cte}
\end{equation}
\end{theorem}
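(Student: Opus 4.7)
The plan is to exploit the self-similar tiling principle of \cite{[MO]} together with the characterization of $P^{s}(E)$ in terms of a minimal spherical density. Concretely, for self-similar sets satisfying the SSC one has $P^{s}(E)=\mu(E)/d_{\min}$, where $d_{\min}$ is the infimum of the normalized spherical measure $\mu(B(x,r))/(2r)^{s}$ over a suitable family of balls centered on $E$ (this is precisely what makes the tile of minimal spherical density give the exact value of $P^{s}$, as recalled in the introduction). The algorithm of \cite{[LLM4]} outputs $\tilde{M}_{k}$ by restricting this infimum to balls associated with the centers and radii available at the $k$-th iterate, so the task reduces to quantifying how far such a restricted infimum can be from the true one.

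First I would fix a ball $B(x,r)$ with $x\in E$ that is an (almost) minimizer of the normalized density. Because $c-3Rr_{\max}^{k}-2Rr_{\max}^{k+1}>0$, I can locate a $k$-level cylinder $f_{\mathbf{i}}(E)$ containing $x$ and replace $B(x,r)$ by a nearby ball $B(x',r')$ whose center is among those inspected at the $k$-th step of the algorithm; the SSC gap $c$ guarantees that any such small perturbation does not suddenly include (or exclude) a different $k$-cylinder. The index $q_{k}$ is chosen precisely so that $Rr_{\max}^{q_{k}}\leq c-2Rr_{\max}^{k+1}-2Rr_{\max}^{k}$, which means that only the $q_{k}$-th level descendants inside $f_{\mathbf{i}}(E)$ can be affected by the swap from $B(x,r)$ to $B(x',r')$.

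Next I would bound the difference $|\mu(B(x,r))/(2r)^{s}-\mu(B(x',r'))/(2r')^{s}|$. The numerator difference is controlled by the $\mu$-measure of a shell of thickness $O(r_{\max}^{k})$, which by self-similarity and scaling by the smallest admissible $q_{k}$-level cylinder contributes the factor $r_{\min}^{-sq_{k}}$. The denominator difference is handled by the mean-value estimate $|(2r)^{s}-(2r')^{s}|\leq s\,2^{s+1}\,\rho^{s-1}|r-r'|$ for some intermediate $\rho$; the constant $Q$ is exactly the bound for $\rho^{s-1}$, which must be split into the two cases $s\geq 1$ and $s<1$ because $\rho\mapsto\rho^{s-1}$ is monotone in opposite directions (and the relevant extremal value of $\rho$ is given in each case by the geometric parameters appearing in \eqref{cte}). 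Combining these two estimates through the triangle inequality, and summing the factor $r_{\max}^{k}$ coming from $|r-r'|$, yields the stated $\varepsilon_{k}$.

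The main obstacle is the bookkeeping that glues these pieces together: one has to argue simultaneously that (i) the perturbed ball $B(x',r')$ is genuinely one of those inspected by the $k$-th iterate of the algorithm so that $\mu(B(x',r'))/(2r')^{s}\geq \tilde{M}_{k}\cdot\mu(E)^{-1}\cdot(\cdots)$ up to the scaling used in \eqref{MK}; (ii) the swap $B(x,r)\leadsto B(x',r')$ neither adds nor removes entire $k$-cylinders, so that the error really lives inside $f_{\mathbf{i}}(E)$ and can be expressed in terms of $q_{k}$-descendants; and (iii) the two-sided inequality between $P^{s}(E)$ and $\tilde{M}_{k}$ holds, since one direction is immediate from the algorithm being a restricted infimum but the other requires the above perturbation argument. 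Once these geometric facts are established, the bound \eqref{gnrlbdd} follows by a direct combination of the shell estimate with the power-mean-value inequality.
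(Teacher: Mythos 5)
Your overall plan (perturb an optimal ball to one inspected at step $k$, control the density change, use the mean value theorem for the radius part, and split $s\geq 1$ versus $s<1$ to get $Q$) resembles the paper's strategy, but the central mechanism you propose does not work and is not what the paper does. The decisive step in the paper is that no estimate of a ``shell'' measure is ever needed: the approximating ball is constructed (Lemmas \ref{aproxball} and \ref{aproxball2}) so that a \emph{one-sided} comparison of measures holds exactly, namely $\mu (B(\tilde x,\tilde d))\geq \mu _{k}(B(x^{\prime },d^{\prime }))$ in one case and $\mu _{k}(B(\tilde x,\tilde d))\geq \mu (B(\tilde x,d^{\prime }))$ in the other, with $|d-d^{\prime }|\leq 2Rr_{\max }^{k}$. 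Then $\frac{(2d)^{s}}{\mu (B)}-\frac{(2d^{\prime })^{s}}{\mu _{k}(B^{\prime })}\leq \frac{(2d)^{s}-(2d^{\prime })^{s}}{\mu _{k}(B^{\prime })}$, so the whole error is the radius perturbation divided by $\mu _{k}(B^{\prime })\geq r_{\min }^{q_{k}s}$ (this, not any shell, is where $r_{\min }^{-sq_{k}}$ and the choice of $q_{k}$ come from: the admissible ball contains a full cylinder of generation $q_{k}$). Your step bounding the numerator difference by the $\mu$-measure of a shell of thickness $O(r_{\max }^{k})$ is both unjustified and quantitatively too lossy: for a self-similar measure the mass of such an annulus is generically of order $r_{\max }^{ks}$ (e.g.\ for linear Cantor sets, where an interval of length $\delta$ can carry mass $\simeq \delta ^{s}$), and for $s<1$ this is far larger than the $r_{\max }^{k}$ appearing in $\varepsilon _{k}$, so your route cannot reach the stated bound.

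Two further claims in your sketch are incorrect as stated. First, the assertion that the swap $B(x,r)\leadsto B(x^{\prime },r^{\prime })$ ``neither adds nor removes entire $k$-cylinders'' because of the gap $c$ is false: $c$ separates the first-level pieces only, $k$-cylinders have diameter at most $Rr_{\max }^{k}$, and a perturbation of center and radius of size $2Rr_{\max }^{k}$ can move whole $k$-cylinders across the boundary; the paper's lemmas are designed precisely to absorb this by taking $d^{\prime }$ as a suitable minimum (resp.\ maximum) of distances so that the one-sided measure inequalities hold despite cylinders changing status. Second, the direction $\tilde M_{k}\leq P^{s}(E)+\varepsilon _{k}$ is \emph{not} ``immediate from the algorithm being a restricted infimum'': the algorithm evaluates densities with the discrete measure $\mu _{k}$, and $\mu _{k}(B(x,d))$ can be strictly smaller than $\mu (B(x,d))$ (cylinders meeting the ball only partially are dropped), so $\tilde M_{k}$ may exceed $P^{s}(E)$. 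That direction requires the reverse comparison of Lemma \ref{aproxball2} together with the characterization \eqref{corol} with $b=c-3Rr_{\max }^{k}-2Rr_{\max }^{k+1}$, which is exactly where the hypothesis $c-3Rr_{\max }^{k}-2Rr_{\max }^{k+1}>0$ enters; your proposal never uses this hypothesis for that purpose. To repair the argument you would need to replace the shell estimate and the ``no cylinders change'' claim by the construction of comparison balls with one-sided $\mu$ versus $\mu _{k}$ inequalities, and to treat both inequalities between $P^{s}(E)$ and $\tilde M_{k}$ symmetrically.
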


\begin{theorem}
\label{mainc}Suppose that the system $\Psi =\{f_{1},\dots,f_{m}\}$ satisfies
the SSC. Then, for every $k\in \mathbb{N}^{+}$ and every $\tilde{m}_{k}$
given by \eqref{mk}, there holds
\begin{equation}
|C^{s}(E) -\tilde{m}_{k}|\leq \epsilon _{k}\text{.}  \label{gnrlbddc}
\end{equation}
where
\begin{equation*}
\epsilon _{k}:=\frac{s2^{s+1}R\mathcal{Q}}{r_{\min }^{qs}}r_{\max}^{k},
\end{equation*}
$s=\dim _{H}(E)$, $q\in \mathbb{N}^{+}$ is such that $Rr_{\max }^{q}\leq
c<Rr_{\max }^{q-1}$, and
\begin{equation}
\mathcal{Q}:=\left\{
\begin{array}{c}
R^{s-1}\text{ if }s\geq 1, \\
\\
c^{s-1}\text{ if }s<1.%
\end{array}
\right.  \label{ctec}
\end{equation}
\end{theorem}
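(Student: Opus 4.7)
The plan is to mirror the structure of the proof of Theorem~\ref{main} but adapted to the centered Hausdorff measure $C^{s}$. The algorithm of \cite{[LLM3]} rests on a density characterisation of the form $C^{s}(E)=\mu(E)/D$, where $D$ is the maximal centered $s$-density $\sup \mu(\overline{B}(x,r))/(2r)^{s}$, taken over $x\in E$ and $r$ in an appropriate scale window, and the quantity $\tilde{m}_{k}$ in \eqref{mk} is the evaluation of this ratio over a finite discretisation: level-$k$ cylinders $f_{\sigma}(E)$ with $|\sigma|=k$, together with a finite grid of radii. The error analysis therefore reduces to comparing the true extremal density with its value over this finite grid.

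First I would invoke the self-similar tiling principle of \cite{[MO]} to localise the search: by the self-similarity of $\mu$, any candidate pair $(x,r)$ approaching $D$ can be transported via some $f_{\sigma}^{-1}$ so that its centre lies in a level-$k$ tile, at the cost of a scaling factor $r_{\sigma}^{s}$. The SSC, combined with the defining inequality $Rr_{\max}^{q}\leq c<Rr_{\max}^{q-1}$, ensures that a ball of radius at most $c$ about a point in $f_{\sigma}(E)$ does not meet any other level-$|\sigma|$ tile, so the measure of the ball splits cleanly along the iterated subdivision. This legitimises reducing an arbitrary radius to the canonical window $[c,R]$ at the cost of the factor $r_{\min}^{-qs}$ in $\epsilon_{k}$.

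Next I would bound the error committed in replacing $(x,r)$ by its nearest grid point $(\hat{x},r')$. The positional error is at most $Rr_{\max}^{k}$, the diameter bound for a level-$k$ cylinder, and the radial error is of the same order. Applying the mean-value inequality $|a^{s}-b^{s}|\leq s\max(a,b)^{s-1}|a-b|$ to both the denominator $(2r)^{s}$ and, via self-similarity, to the measure $\mu(\overline{B}(x,r))$ in the numerator, one obtains a discrepancy of the form $s\cdot 2^{s+1}\cdot R\cdot r^{s-1}\cdot r_{\max}^{k}$, which accounts for the factor $s2^{s+1}R$ in $\epsilon_{k}$. The two branches of \eqref{ctec} come from the two ways of dominating $r^{s-1}$ on $[c,R]$: for $s\geq1$ use $r\leq R$, giving $R^{s-1}$; for $s<1$ use $r\geq c$, giving $c^{s-1}$.

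The main obstacle will be organising the three sources of error jointly (positional discretisation of the centre, discretisation of the radius, and the scale reduction to $[c,R]$) so that after all reductions the total bound has exactly the stated form. A simplifying feature relative to Theorem~\ref{main} is that, since the centered Hausdorff case is governed by an upper density realised in the limit $r\to 0$, the scale-invariance reduction via a fixed $q$ suffices; no $k$-dependent choice $q_{k}$ is needed, and no auxiliary threshold of the form $c-3Rr_{\max}^{k}-2Rr_{\max}^{k+1}>0$ arises. This is precisely why \eqref{gnrlbddc} is valid for every $k\in\mathbb{N}^{+}$, whereas \eqref{gnrlbdd} requires such a lower bound on $k$.
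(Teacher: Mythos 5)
Your overall skeleton (compare the extremal density over the window $[c,R]$ with its discrete evaluation, use the mean value theorem on $d\mapsto d^{s}$, split $\mathcal{Q}$ according to $s\geq 1$ or $s<1$) is the right one, but there is a genuine gap at the heart of the argument: you never address the fact that $\tilde{m}_{k}$ is computed with the discrete measure $\mu _{k}=\sum_{x\in A_{k}}r_{\mathbf{i}_{k}^{x}}^{s}\delta _{x}$, not with $\mu $. The paper's proof hinges on two approximation lemmas comparing $\mu $ and $\mu _{k}$ by \emph{one-sided} inclusions: for an optimal $(x,d)\in \mathcal{A}$ one finds $(x^{\prime },d^{\prime })\in A_{k}\times \bar{D}_{k}^{x^{\prime }}$ with $d^{\prime }\leq d+2Rr_{\max }^{k}$ and $\mu (\bar{B}(x,d))\leq \mu _{k}(\bar{B}(x^{\prime },d^{\prime }))$ (every generation-$k$ cylinder meeting $\bar{B}(x,d)$ contributes its full $\mu _{k}$-weight to the slightly enlarged discrete ball), and conversely, for the discrete minimiser $(x,d)$ one finds $d^{\prime }\leq d+Rr_{\max }^{k}$ with $(x,d^{\prime })\in \mathcal{A}$ and $\mu _{k}(\bar{B}(x,d))\leq \mu (\bar{B}(x,d^{\prime }))$. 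Because these comparisons are one-sided, the only error term that survives is the change of radius in the numerator $(2d)^{s}$, which the mean value theorem handles; both directions are needed for the two cases $\tilde{m}_{k}\geq C^{s}(E)$ and $\tilde{m}_{k}\leq C^{s}(E)$. Your substitute --- applying a ``mean-value inequality \dots via self-similarity'' to $\mu (\bar{B}(x,r))$ itself under perturbation of the centre and radius --- does not work: the measure of a ball has no such modulus of continuity (the annulus $\bar{B}(x,r+\delta )\setminus \bar{B}(x,r)$ has diameter of order $r$, not $\delta $), so no bound of the stated exponential form comes out of that step, and a single ``nearest grid point'' replacement cannot deliver both directions of the comparison.

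Two further misstatements. The factor $r_{\min }^{-qs}$ is not the ``cost'' of reducing radii to the window $[c,R]$: the characterization $C^{s}(E)=\min \{\bar{h}(x,d):(x,d)\in \mathcal{A}\}$ from \cite{[LLM1]} is exact, with no loss, and no transport via the tiling principle is needed. The factor arises because every admissible ball has radius at least $c\geq Rr_{\max }^{q}$ and therefore contains the generation-$q$ cylinder of its centre, which gives the denominator bound $\mu _{k}(\bar{B}(x,d))\geq r_{\min }^{qs}$ as in \eqref{bd0c}. Likewise, $C^{s}(E)$ is not ``governed by an upper density realised in the limit $r\to 0$''; the reason no threshold on $k$ is required is simply that the admissible distances $\bar{D}_{k}$ of Algorithm~\ref{algocent} already lie in $[c,R]$, whereas the packing algorithm admits radii down to $c-2Rr_{\max }^{k}-2Rr_{\max }^{k+1}$, whose positivity forces the restriction on $k$ in Theorem~\ref{main}.
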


Here, $\dim _{H}(E)$ and $R$ stand for the \emph{Hausdorff dimension} and
the \emph{diameter} of the self-similar set $E$.

As discussed in Sections \ref{packex} and \ref{HCex}, one of the most
important features of Theorems~\ref{main} and \ref{mainc} is that they
provide sharp bounds for the exact values of the corresponding measures.
Moreover, these bounds yield automatically estimates of the corresponding
measures, accurate in some cases to as many as $14$ decimal places. In the
difficult case of self-similar sets having dimensions greater than one, for
which less is known, we give examples with five decimal place accuracy. For
instance, applying Theorem~\ref{main} to the family of Sierpinski gaskets $%
\left\{ S_{r}\right\} $ with $\dim _{H}(S_{r})=-\frac{\log 3}{\log r}$ (see %
\eqref{sierclas} for a definition), yields $P^{s}(S_{0.37})\simeq 3.8728$
(see Table~\ref{times}) and $P^{s}(S_{0.42})\simeq 3.62$. We also get $%
P^{s}(K_{\frac{4}{10}})\simeq 5.27$, where $K_{\frac{4}{10}}$ is the plane
Cantor set of dimension $-\frac{\log 4}{\log 0.4}$. To our knowledge none of
these estimates were previously known.

However, the most important consequence of the combination of Theorems~\ref%
{main} and \ref{mainc} with the algorithms given in \cite{[LLM3]} and \cite%
{[LLM4]} is that it automatically provides an approximation to the value of
the measure of any self-similar set satisfying the SSC. We remark that the
precision of the results depends on the size of the contraction ratios.
Namely, the accuracy achieved improves as the contraction ratios decrease
(see \cite{[LLM3]}, \cite{[LLM4]} and Section~\ref{conclusions} below for a
detailed discussion). In particular, the examples given in Sections~\ref%
{packex} and \ref{HCex} show that the algorithms accurately recover the
known values of these measures for sets with dimension less than one.
Moreover, the results presented in this article serve to rule out certain
potential formulas for some classes of self-similar sets.

The paper is divided into two main sections, one devoted to the packing
measure and the other to the centered Hausdorff measure. In each case, we
first recall the relevant algorithm from \cite{[LLM3]} or \cite{[LLM4]},
although in the case of the centered Hausdorff measure we give some
improvements to the algorithm. This is done in Sections~\ref{recallpack}\
and \ref{recallch}. Then, we prove Theorems~\ref{main} and \ref{mainc} at
the ends of Sections \ref{ratepac} and \ref{ratehc}, respectively. It is
remarkable that these proofs do not use the convergence of the corresponding
algorithms, so the present note provides shorter alternative proofs of their
convergence. Finally, Sections \ref{packex} and \ref{HCex}\ are devoted to
analyzing the results obtained by applying Theorems~\ref{main} and \ref%
{mainc} to the examples given in \cite{[LLM3]} and \cite{[LLM4]}. These
numerical experiments have a twofold purpose. On the one hand they
illustrate the theoretical results, showing how the algorithms perform in
practice. On the other hand they offer quite complete information,
previously unavailable in the literature, on the exact values of the packing
and centered Hausdorff measures of the self-similar sets in three of the
most classic families of self-similar sets, namely, the central Cantor sets
in the line, the Sierpinski gaskets, and the Cantor sets in the unit square.
Finally, in Section \ref{conclusions} we discuss the computability of metric
measures on self-similar sets in view of the results obtained in this paper.

\begin{notation}
We denote the open and closed ball with center $x$ and radius $r$ by $%
B(x,d)=\{y\in \mathbb{R}^{n}:|x-y|<d\}$ and $\bar{B}(x,d)=\{y\in\mathbb{R}%
^{n}:|x-y|\leq d\}$, respectively. We use the notation $\partial B(x,d)$ for
the boundary of $B(x,d)$. Given $A\subset \mathbb{R}^{n}$, we write $|A|$
for the diameter of $A$ and $A^{c}=\{x\in \mathbb{R}^{n}:x\notin A\}$ for
the complement of $A$.

We write $s$ for the \emph{similarity dimension} of $E$, i.e., the unique
solution $s$ of $\sum_{i=1}^{m}r_{i}^{s}=1$. Sometimes we will refer to $s$
as the Hausdorff dimension, $\dim _{H}(E)$, of $E$, since the similarity and
Hausdorff dimension coincide when $E$ is a totally disconnected self-similar
set, as in the present note.

For the code space we use the following notation. Let $M:=\{1, \dots ,m\}$
and
\begin{equation*}
M^{k}=\{\mathbf{i}_{k}=(i_{1},\dots,i_{k}):i_{j}\in M\quad \forall
\,\,j=1,\dots,k\}.
\end{equation*}
Given $\mathbf{i}_{k}=i_{1}i_{2}\dots i_{k}\in M^{k}$, we write $f_{\mathbf{i%
}_{k}}$ for the similitude $f_{\mathbf{i}_{k}}=f_{i_{1}}\circ f_{i_{2}}\circ
\dots \circ f_{i_{k}}$ with similarity ratio $r_{\mathbf{i}%
_{k}}=r_{i_{1}}r_{i_{2}}\dots r_{i_{k}}$, and, given $A\subset \mathbb{R}%
^{n} $, we write $A_{\mathbf{i}_{k}}=f_{\mathbf{i}_{k}}(A)$, and refer to
the sets $E_{\mathbf{i}_{k}}=f_{\mathbf{i}_{k}}(E)$ as the \emph{cylinder
sets of generation }$k$. In particular, the sets $E_{i}=f_{i}(E)$, $i\in M$,
are called \emph{basic cylinder sets}.

We denote by $\mu $ the \emph{natural probability measure}, or \emph{%
normalized Hausdorff measure}, defined on the ring of cylinder sets by
\begin{equation}
\mu (E_{\mathbf{i}}) =r_{\mathbf{i}}^{s},\qquad \forall\,\, \mathbf{i\in
\cup }_{k=1}^{\infty }M^{k},  \label{mu}
\end{equation}%
and then extended to Borel subsets of $E$ (see \cite{[HUTCH]}).
\end{notation}

\begin{remark}
\label{rough idea} With the above notation, the idea underlying the
estimation of $P^{s}(E)$ and $C^{s}(E)$ can be summarized as follows: Find
the minimum and the maximum of the spherical densities $\frac{\mu (B(x,r))}{%
(2r)^{s}}$ on suitable families of balls. The inverse of the minimum is the
desired estimate for $P^{s}(E)$ and the inverse of the maximum is that for $%
C^{s}(E)$. Furthermore, these estimates give valuable additional information
about the behavior of $\mu $. For, if we let
\begin{equation*}
Spec=\left\{ \lim_{k\rightarrow \infty }\frac{\mu (B(x,r_{k}))}{(2r_{k})^{s}}%
:x\in E\text{ and }\lim_{k\rightarrow \infty }r_{k}=0\right\}
\end{equation*}
be the full range of limiting values of the spherical densities of $\mu $ on
balls, then the\ interval $[P^{s}(E)^{-1},C^{s}(E)^{-1}]$ is the minimal
interval that contains $Spec$ or spectral range of the density of $\mu$ (see
\cite{[LLM2]} and \cite{[MO]}).
\end{remark}

\section{Packing measure}

The \emph{packing measure} of a compact set $A$ with finite packing
premeasure can be defined by
\begin{equation*}
P^{s}(A) =\lim_{\delta \rightarrow 0}P_{\delta }^{s}(A),
\end{equation*}
where
\begin{equation*}
P_{\delta }^{s}(A) =\sup \left\{ \sum_{i=1}^{\infty }\left\vert
B_{i}\right\vert ^{s}:\left\vert B_{i}\right\vert \leq \delta ,\, i=1,2,3,
\dots\right\}
\end{equation*}
is a set function nondecreasing with respect to $\delta$ and the supremum is
taken over all countable collections of disjoint Euclidean balls centered in
$A$ and having diameters smaller than $\delta$ (see \cite{[Fen]}). Recall
that, as explained in \cite{[S]}-\cite{[T]}, a two-stage definition is
needed for general Euclidean sets.

In the specific case of self-similar sets much effort has been made to find
a simplified definition of $P^{s}$ suitable for computation. In \cite{[MO]}
it was shown how the above one-stage definition allows a characterization of
$P^{s}$ in terms of density functions which later facilitated tackling the
computability problem algorithmically (see \cite{[LLM1]} and \cite{[TC]}).
Next, we see that this characterization is also central for proving the rate
of convergence of the packing measure algorithm given by Theorem~\ref{main}.

\subsection{Previous results: Packing measure algorithm\label{recallpack}}

We begin with the following formula for the packing measure used in \cite%
{[LLM4]} as a starting point for the construction of an efficient algorithm.
For a self-similar set $E$ satisfying the SSC,
\begin{equation}
P^{s}(E) =\max \left\{ h(x,d):(x,d) \in E\times \lbrack c,\frac{c}{r_{\min }}%
]\right\} \text{,}  \label{packssc3}
\end{equation}
where $h(x,d):=\frac{(2d)^{s}}{\mu (B(x,d))}$. In \cite[Theorem~1]{[LLM4]},
there was proved the more general characterization of $P^{s}(E)$ as
\begin{eqnarray}
P^{s}(E) &=&\max \left\{ h(x,d):x\in E,d\leq a\right\}  \label{packssc} \\
&=&\max \left\{ h(x,d):x\in E,ar_{\min }\leq d\leq a\right\} ,\text{ }
\label{packssc2}
\end{eqnarray}
\bigskip where $a\in (0,\frac{c}{r_{\min }}]$ (see \eqref{SSC}, \eqref{c},
and \eqref{rminmax}, for the meaning of the notation used here). The reason
for choosing $a=\frac{c}{r_{\min }}$ in \eqref{packssc3} is to increase the
efficiency of the algorithm by both reducing the cardinality of the set of
balls on which the maximum is to be computed and increasing the radii of
these balls. However, for convenience, in the proof of Theorem~\ref{main} we
shall use \eqref{packssc2} in the form
\begin{equation}
P^{s}(E) =\max \left\{ h(x,d) :x\in E,b\leq d\leq \frac{b}{r_{\min }}%
\right\} ,  \label{corol}
\end{equation}
where $b\in (0,c]$.

Next, we recall the algorithm developed in \cite{[LLM4]} for computing the
value of $P^{s}(E)$ via approximations of the maximal value of $h(x,d)$.

\begin{algorithm}[Packing measure algorithm]
\label{case1}

\textbf{Input of the Algorithm}: The system, $\Psi$, of contracting
similitudes and $k_{\max }$, the iteration on which the algorithm's run was
stopped.

Let $k \le k_{\max }$ such that $c-2Rr_{\max}^{k}-2Rr_{\max }^{k+1}>0$.

\begin{enumerate}
\item \textbf{Construction of }$\mathbf{A}_{k}$. Let\ $A_{1}=\cup _{i\in
M}\{x\in \mathbb{R}^{n}:f_{i}(x)=x\}$ be the set consisting of the $m$ fixed
points of the similitudes in $\Psi $. For every $k\in \mathbb{N}^{+}$, let $%
A_{k}=S\Psi (A_{k-1})$ be the set formed by the $m^{k}$ points obtained by
applying $S\Psi (x)=\bigcup\limits_{i\in M}f_{i}(x)$ to each of the $m^{k-1}$
points of $A_{k-1}$.

\begin{notation}
\label{not2} For every $x\in A_{k}$ we denote by $\mathbf{i}%
_{k}^{x}=i_{1}^{x}\dots i_{k}^{x}\in M^{k}$ the unique sequence of length $k$
such that $x=f_{\mathbf{i}_{k}^{x}}(y)$ for some $y\in A_{1}$. Then $E_{%
\mathbf{i}_{k}^{x}}=f_{\mathbf{i}_{k}^{x}}(E)$ denotes the unique cylinder
set of generation $k$ such that $x\in E_{\mathbf{i}_{k}^{x}}$. Observe that
\begin{equation*}
x\in A_{k}\setminus A_{k-1}\iff i_{k}^{x}\neq i_{k-1}^{x}.
\end{equation*}
\end{notation}

\item \textbf{Generation of the list of distances.}

This step consists in computing the set
\begin{equation}
\Delta _{k}:=\{dist(x,y):(x,y) \in A_{k}\times A_{k}\}  \label{dist}
\end{equation}
of distances between pairs of points in $A_{k}$. It is important to note
that $\Delta _{k}\subset \Delta _{k+1}$ since, by construction, $%
A_{k}\subset A_{k+1}$. Hence, the computation of the set $\Delta _{k-1}$ of
distances should be avoided in the construction of $\Delta _{k}$. Therefore,
we shall calculate only those distances $dist(x,y) \in \Delta _{k}$ where
\begin{equation*}
i_{k}^{x}\neq i_{k-1}^{x}\text{ or }i_{k}^{y}\neq i_{k-1}^{y}\text{.}
\end{equation*}
For every $k\in \mathbb{N}^{+}$ we write $\Delta _{k}^{0}:=\{dist(x,y) \in
\Delta _{k}:i_{k}^{x}\neq i_{k-1}^{x}$ or $i_{k}^{y}\neq i_{k-1}^{y}\}$ with
$\Delta _{1}^{0}=\Delta_{1}$ and we write $\Delta _{k}=\Delta _{k}^{0}\cup
\Delta _{k-1}$.

Henceforth, we assign the code $(\mathbf{i}_{k}^{x}$, $\mathbf{i}_{k}^{y})$
to each $dist(x,y) \in \Delta _{k}$ and refer to $(\mathbf{i}_{k}^{x}$, $%
\mathbf{i}_{k}^{y})$ as the $k$-\emph{address} of $dist(x,y)$. Observe that
the $(k+1)$-\emph{address} of $dist(x,y) \in \Delta _{k}$ is\
\begin{equation*}
(\mathbf{i}_{k}^{x}i_{k}^{x},\mathbf{i}_{k}^{y}i_{k}^{y})=(i_{1}^{x}\dots
i_{k}^{x}i_{k}^{x},i_{1}^{y}\dots i_{k}^{y}i_{k}^{y}).
\end{equation*}

\item \textbf{Construction of }$\mathbf{\mu }_{k}$. Given $k\in \mathbb{N}%
^{+}$, set
\begin{equation}
\mu _{k}(x) =r_{\mathbf{i}_{k}^{x}}^{s}\,\, \forall \,\,x\in A_{k}.
\label{muigulapto}
\end{equation}
Then,
\begin{equation}  \label{muk}
\mu _{k}=\sum_{x\in A_{k}}r_{\mathbf{i}_{k}^{x}}^{s}\delta _{x}
\end{equation}
is a discrete probability measure supported on the $m^{k}$ points of $A_{k}$.

\item \textbf{Construction of }$\tilde{M}_{k}$.

Given $x\in A_{k}$:

\begin{itemize}
\item[4.1] Rank in increasing order the distances $d\in \Delta _{k}$
containing the letter $\mathbf{i}_{k}^{x}$ in their addresses and such that $%
d\leq \frac{c}{r_{\min }}$.


\item[4.2] Let $0 = d_{1}^{x}\leq d_{2}^{x}\leq \dots \leq d_{m_{x}}^{x} $
be the list of ordered distances, where $m_{x}\in \{1,\dots ,m^{k}\}$. For
every $j\in \{1,\dots ,m_{x}\}$, let $t_{j}\leq j$ be such that $%
d_{j}^{x}=d_{j-1}^{x}= \dots =$ $d_{t_{j}}^{x}\neq d_{t_{j}-1}^{x}$. Then

\begin{equation}
\mu _{k}(B(x,d_{j}^{x})) :=\sum_{q=1}^{t_{j}-1}r_{\mathbf{i}%
_{k}^{x_{q}}}^{s},  \label{mukigualball}
\end{equation}
where $x_{q}\in A_{k}$ is the point chosen for calculating the distance $%
d_{q}^{x}=dist(x,x_{q})$, for every $q=1,\dots ,j$ and $\mu
_{k}(B(x,d_{1}^{x}))=0$.

In the particular case when $r_{i}=r$ for all $i\in M$, \eqref{mukigualball}
simplifies to
\begin{equation*}
\mu _{k}(B(x,d_{j}^{x}))=\frac{t_{j}-1}{m^{k}}.
\end{equation*}

Compute
\begin{equation}
h_{k}(x,d_{j}^{x}) :=\frac{(2d_{j}^{x})^{s}}{\mu _{k}(B(x,d_{j}^{x}))}=\frac{%
(2d_{j}^{x})^{s}}{\sum_{q=1}^{t_{j}-1}r_{\mathbf{i}_{k}^{x_{q}}}^{s}}
\label{fkigual}
\end{equation}
only for those distances $d_{j}^{x}$ in the list satisfying
\begin{equation*}
0<c-2Rr_{\max }^{k}-2Rr_{\max }^{k+1}\leq d_{j}^{x}.
\end{equation*}

\item[4.3] Find the maximum
\begin{equation*}
M_{k}(x) :=\max \{h_{k}(x,d_{j}^{x}) :c-2Rr_{\max }^{k}-2Rr_{\max
}^{k+1}\leq d_{j}^{x}\leq \frac{c}{r_{\min }}\}
\end{equation*}
of the values computed in 4.2.

\item[4.4] Repeat steps 4.1-4.3 for each $x\in A_{k}$.

\item[4.5] Take the maximum
\begin{equation}
\tilde{M}_{k}:=\max \{M_{k}(x) :x\in A_{k}\}  \label{MK}
\end{equation}
of the $m^{k}$ values computed in step 4.4.
\end{itemize}
\end{enumerate}
\end{algorithm}

Observe that, for some $(\tilde{x}_{k},\tilde{y}_{k}) \in A_{k}\times A_{k}$,

\begin{align}  \label{mktildedef}
\begin{split}
\tilde{M}_{k} &:=h_{k}(\tilde{x}_{k},dist(\tilde{x}_{k},\tilde{y}_{k})) \\
&=\max \{h_{k}(x,dist(x,y) ) :(x,y) \in A_{k}\times A_{k}\text{ and }
c-2Rr_{\max }^{k}-2Rr_{\max }^{k+1}\leq dist(x,y) \leq \frac{c}{r_{\min }}\}.
\end{split}%
\end{align}

\begin{notation}
\label{notdist}In what follows we use the following notation. We denote by $%
D_{k}^{x}$ the set of distances selected in steps 4.1 and 4.2, and we let $%
D_{k}:=\cup _{x\in A_{k}}D_{k}^{x}$. We refer to $D_{k}^{x}$ as the set of
admissible distances for $x\in A_{k} $. Note that
\begin{equation}
D_{k}^{x}\subset D_{k}\subset \lbrack c-2Rr_{\max}^{k}-2Rr_{\max}^{k+1},
\frac{c}{r_{\min }}].  \label{adm}
\end{equation}
\end{notation}

It is important to note that the balls admissible in the algorithm have
radii in the interval $[c-2Rr_{\max}^{k}-2Rr_{\max}^{k+1},\frac{c}{r_{\min }}%
]\supset \lbrack c,\frac{c}{r_{\min }}]$ (see \eqref{packssc3}). This
containment helps in comparing the densities giving the packing measure with
those computed by the algorithm (see Section~\ref{ratepac}).

\subsection{Rate of convergence for the packing measure algorithm\label%
{ratepac}}

This section is devoted to proving Theorem~\ref{main}. One of the
difficulties one needs to overcome to show the rate of convergence %
\eqref{gnrlbdd} is to obtain a comparison between the measures $\mu $ and $%
\mu _{k}$ of a given ball (see \eqref{mu} and \eqref{muk}). Note that to
obtain a bound for $|P^{s}(E)-\tilde{M}_{k}|$ we need to compare the
densities $h(x,d)$ given in \eqref{corol} with those given in %
\eqref{mktildedef}. The following lemmas show that it is possible to
construct the approximating balls needed for such a comparison.

\begin{lemma}
\label{aproxball} For every $k\in \mathbb{N}^{+}$ and $(x,d)\in E\times
(0,\infty )$ such that $\partial B(x,d)\cap E\neq \emptyset$, there exists $%
(x^{\prime },d^{\prime }) \in A_{k}\times \lbrack d-2Rr_{\max
}^{k},d+2Rr_{\max }^{k}]$ such that

\begin{enumerate}
\item[(i)] $|x-x^{\prime }|\leq Rr_{\max }^{k}$,

\item[(ii)] $d^{\prime }=|x^{\prime }-y|$ for some $y\in A_{k}$,

\item[(iii)] $\mu (B(x,d))\geq \mu _{k}(B(x^{\prime },d^{\prime }))$.
\end{enumerate}
\end{lemma}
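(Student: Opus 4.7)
The plan is to construct $(x', d')$ explicitly from $(x, d)$ and verify properties (i), (ii), (iii) in order.

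\textbf{Step 1 (choosing $x'$).} By the SSC the cylinders of generation $k$ are pairwise disjoint, so there is a unique $\mathbf{i} \in M^k$ with $x \in E_{\mathbf{i}}$. Take $x'$ to be the unique point of $A_k$ having address $\mathbf{i}_k^{x'} = \mathbf{i}$; then $x' \in E_{\mathbf{i}}$ and $|x - x'| \le |E_{\mathbf{i}}| = r_{\mathbf{i}} R \le R r_{\max}^k$, which is (i).

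\textbf{Step 2 (choosing $d'$).} The hypothesis $\partial B(x,d) \cap E \neq \emptyset$ provides $z \in E$ with $|x - z| = d$. Let $\tilde y \in A_k$ be the representative of the cylinder of generation $k$ containing $z$, so that $|\tilde y - z| \le R r_{\max}^k$. Combined with $|x - x'| \le R r_{\max}^k$, two applications of the triangle inequality give $|x' - \tilde y| \in [d - 2Rr_{\max}^k, d + 2Rr_{\max}^k]$. In particular the finite set
\[
S := \{\,|x' - y| : y \in A_k,\ |x' - y| \ge d - 2R r_{\max}^k\,\}
\]
is nonempty. Define $d' := \min S$, attained at some $y \in A_k$. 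This yields (ii), and the chain $d - 2Rr_{\max}^k \le d' \le |x' - \tilde y| \le d + 2R r_{\max}^k$ places $d'$ in the required interval.

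\textbf{Step 3 (verifying (iii)).} The critical observation is that every $w \in A_k$ with $|w - x'| < d'$ must in fact satisfy $|w - x'| < d - 2R r_{\max}^k$: otherwise $|w - x'|$ would lie in $S$, forcing $|w - x'| \ge d'$, a contradiction. Combined with $|x - x'| \le R r_{\max}^k$ and the fact that $E_{\mathbf{i}_k^w}$ contains $w$ and has diameter at most $Rr_{\max}^k$, the triangle inequality forces $E_{\mathbf{i}_k^w} \subset B(x, d)$. Since distinct cylinders of generation $k$ are disjoint by the SSC,
\[
\mu_k(B(x', d')) = \sum_{\substack{w \in A_k \\ |w - x'| < d'}} r_{\mathbf{i}_k^w}^{s} = \mu\Bigl(\bigcup_w E_{\mathbf{i}_k^w}\Bigr) \le \mu(B(x, d)),
\]
which is (iii).

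The main difficulty is the tension in the choice of $d'$: it must simultaneously lie in the prescribed window around $d$ and be small enough that the open ball $B(x', d')$ contains only representatives of cylinders strictly inside $B(x, d)$. Taking $d'$ to be the \emph{smallest} element of $S$ is exactly what reconciles these two requirements: any $w$ closer to $x'$ than $d'$ is then forced into the gap below the admissible range, which is precisely the condition that produces the containment $E_{\mathbf{i}_k^w} \subset B(x, d)$ needed for (iii).
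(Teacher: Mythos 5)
Your proof is correct, and it follows the same skeleton as the paper's (same choice of $x'$ as the unique $A_k$-point in the cylinder $E_{\mathbf{i}_k^x}$, $d'$ defined as a minimum of distances from $x'$ to points of $A_k$, the boundary point $z$ used to control $d'$, and (iii) obtained because every $A_k$-point strictly inside $B(x',d')$ has its whole $k$-cylinder inside $B(x,d)$), but the admissible set over which you minimize is genuinely different. The paper takes $d':=\min\{|x'-y|:y\in L\}$ with $L=\{y\in A_k: E_{\mathbf{i}_k^y}\cap B^{c}(x,d)\neq\emptyset\}$, i.e.\ a geometric criterion (cylinders escaping the ball); with that choice (iii) is immediate, but both inequalities $d-2Rr_{\max}^k\leq d'\leq d+2Rr_{\max}^k$ require separate triangle-inequality arguments, and the degenerate possibility $d'=0$ (when $x'\in L$) is dealt with as a special case. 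You instead minimize over all $A_k$-distances above the metric threshold $d-2Rr_{\max}^k$, which makes the lower bound and the degenerate case automatic and shifts the work to (iii), where the threshold plus the triangle inequality ($|u-x|\leq Rr_{\max}^k+|w-x'|+Rr_{\max}^k<d$) yields the containment $E_{\mathbf{i}_k^w}\subset B(x,d)$; the boundary hypothesis is used, as in the paper, only to produce $\tilde y$ showing the admissible set is nonempty and giving the upper bound. Both routes are sound; yours is slightly more streamlined (no case split), while the paper's choice of $d'$ carries the geometric meaning that is reused in the companion Lemma~\ref{aproxball2}.
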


\begin{proof}
Let $k\in \mathbb{N}^{+}$ and $(x,d)\in E\times (0,\infty )$ be such that $%
\partial B(x,d)\cap E\neq \emptyset $. Take $x^{\prime}$ to be the unique
point in $A_{k}\cap E_{\mathbf{i}_{k}^{x}}$. Then (i) holds.

Set $L:=\{y\in A_{k}:E_{\mathbf{i}_{k}^{y}}\cap B^{c}(x,d)\neq \emptyset \}$
and let $d^{\prime }:=\min \{|x^{\prime }-y|:y\in L\}$. Observe that the
assumption $\partial B(x,d)\cap E\neq \emptyset $ implies that $L\neq
\emptyset $. Moreover, we can assume that $d^{\prime }\neq 0$ since,
otherwise, $x^{\prime }\in L$, $d\leq Rr_{\max }^{k}$, and $\mu
_{k}(B(x^{\prime },d^{\prime }))=0$ (see step 4.2 in Algorithm~\ref{case1})
and the lemma holds true.

In order to check the inequality $d^{\prime }\leq d+2Rr_{\max }^{k}$, take $%
z\in \partial B(x,d)\cap E$ and $y\in A_{k}\cap E_{\mathbf{i}_{k}^{z}}$.
Then, $y\in L$ and the triangle inequality together with (i) imply
\begin{equation*}
d^{\prime }\leq |x^{\prime }-y|\leq |x-x^{\prime }|+|x-z|+|y-z|\leq
d+2Rr_{\max }^{k}\text{.}
\end{equation*}
The inequality $d^{\prime }\geq d-2Rr_{\max }^{k}$ follows from (i) and the
triangle inequality by taking $y\in L$ such that $d^{\prime }=|x^{\prime}-y|
$ and $z\in E_{\mathbf{i}_{k}^{y}}\cap B^{c}(x,d)$, since then
\begin{equation*}
d\leq |x-z|\leq |x-x^{\prime }|+|x^{\prime }-y|+|y-z|\leq d^{\prime
}+2Rr_{\max }^{k}\text{.}
\end{equation*}
Finally, (iii) holds because for all $y\in B(x^{\prime },d^{\prime })\cap
A_{k}$ we have that $y\notin L$, whence $E_{\mathbf{i}_{k}^{y}}\subset
B(x,d) $. This, in turn, implies
\begin{eqnarray*}
\mu (B(x,d)) &=&\mu (\{E_{\mathbf{i}_{k}}:E_{\mathbf{i}_{k}}\subset
B(x,d)\})+\mu (\{E_{\mathbf{i}_{k}}\cap B(x,d):E_{\mathbf{i}_{k}}\cap
B^{c}(x,d)\neq \emptyset \}) \\
&\geq &\mu _{k}(\{y\in A_{k}:E_{\mathbf{i}_{k}^{y}}\subset B(x,d)\})\geq
\mu_{k}(B(x^{\prime },d^{\prime }))\text{,}
\end{eqnarray*}
which concludes the proof of (iii).
\end{proof}

\begin{lemma}
\label{aproxball2} For every $k\in \mathbb{N}^{+}$ and $(x,d)\in A_{k}\times
D_{k}^{x}$, there exists $d^{\prime }\in \lbrack d-Rr_{\max }^{k},d]$ such
that $\mu _{k}(B(x,d))\geq \mu (B(x,d^{\prime }))$.
\end{lemma}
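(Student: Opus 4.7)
The plan is to take $d':=d-Rr_{\max}^{k}$, which clearly lies in the interval $[d-Rr_{\max}^{k},d]$. If $d'\leq 0$ the open ball $B(x,d')$ is empty, so $\mu(B(x,d'))=0$ and the conclusion is trivial; hence I may assume $d'>0$. The underlying geometric idea is simple: because every cylinder of generation $k$ has diameter at most $Rr_{\max}^{k}$, shrinking the radius by that amount guarantees that any cylinder $E_{\mathbf{i}_k}$ meeting $B(x,d')$ has its distinguished point in $A_k$ already lying inside the larger ball $B(x,d)$.

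First I would unwind the definition of $\mu_{k}$ from step 4.2 and \eqref{muigulapto}: the ranking of distances in step 4.2 makes the sum in \eqref{mukigualball} run exactly over those points of $A_{k}$ at distance \emph{strictly less than} $d$ from $x$, so that
\begin{equation*}
\mu_{k}(B(x,d))\;=\;\sum_{\substack{y\in A_{k}\\ |x-y|<d}}r_{\mathbf{i}_{k}^{y}}^{s}.
\end{equation*}
Next I would use the SSC: the cylinders $\{E_{\mathbf{i}_{k}}:\mathbf{i}_{k}\in M^{k}\}$ are pairwise disjoint, so $\mu$ decomposes as the sum of its restrictions to them, and
\begin{equation*}
\mu(B(x,d'))\;\leq\;\sum_{\substack{\mathbf{i}_{k}\in M^{k}\\ E_{\mathbf{i}_{k}}\cap B(x,d')\neq\emptyset}}\mu(E_{\mathbf{i}_{k}})\;=\;\sum_{\substack{\mathbf{i}_{k}\in M^{k}\\ E_{\mathbf{i}_{k}}\cap B(x,d')\neq\emptyset}}r_{\mathbf{i}_{k}}^{s}.
\end{equation*}

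The key step, and really the only content, is to show every index $\mathbf{i}_{k}$ appearing on the right equals $\mathbf{i}_{k}^{y}$ for some $y\in A_{k}$ with $|x-y|<d$. Given $\mathbf{i}_{k}$ with $E_{\mathbf{i}_{k}}\cap B(x,d')\neq\emptyset$, let $y$ be the unique point of $A_{k}\cap E_{\mathbf{i}_{k}}$ (so $\mathbf{i}_{k}^{y}=\mathbf{i}_{k}$) and pick $z\in E_{\mathbf{i}_{k}}\cap B(x,d')$. Since both $y$ and $z$ belong to $E_{\mathbf{i}_{k}}$, which has diameter at most $Rr_{\mathbf{i}_{k}}\leq Rr_{\max}^{k}$, the triangle inequality gives
\begin{equation*}
|x-y|\;\leq\;|x-z|+|z-y|\;<\;d'+Rr_{\max}^{k}\;=\;d,
\end{equation*}
as desired. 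Combining this with the two displays above bounds $\mu(B(x,d'))$ by $\mu_{k}(B(x,d))$, proving the lemma.

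There is no substantial obstacle here: the argument is essentially a one-line triangle-inequality estimate combined with the disjointness of cylinders provided by the SSC. The only point requiring care is the bookkeeping to reconcile the algorithmic definition of $\mu_{k}(B(x,d))$ (which sums over points strictly inside the open ball) with the cylinder-by-cylinder upper bound on $\mu(B(x,d'))$; both are controlled by the same strict inequality $|x-y|<d$, so they match up cleanly.
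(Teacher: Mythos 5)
Your proof is correct and follows essentially the same route as the paper's: a triangle inequality using the diameter bound $|E_{\mathbf{i}_{k}}|\leq Rr_{\max }^{k}$, the cylinder decomposition of $\mu$, and the reading of $\mu _{k}(B(x,d))$ as the weighted count of points of $A_{k}$ at distance strictly less than $d$ from $x$. The only difference is cosmetic: you fix the worst-case radius $d^{\prime }=d-Rr_{\max }^{k}$ and look at cylinders meeting $B(x,d^{\prime })$, whereas the paper takes the adaptive choice $d^{\prime }=\min \{dist(x,E_{\mathbf{i}_{k}}):E_{\mathbf{i}_{k}}\cap B^{c}(x,d)\neq \emptyset \}$ and uses cylinders contained in $B(x,d)$; both choices lie in $[d-Rr_{\max }^{k},d]$ and yield the same estimate.
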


\begin{proof}
Let $k\in \mathbb{N}^{+}$ and $(x,d)\in A_{k}\times D_{k}^{x}$ \textbf{(}see
Notational Convention \ref{notdist}\textbf{)}. Set $d^{\prime }:=\min
\{dist(x,E_{\mathbf{i}_{k}}):E_{\mathbf{i}_{k}}\cap B^{c}(x,d)\neq \emptyset
\}$. Note that, by definition of $D_{k}^{x}$, $\partial B(x,d)\cap A_{k}\neq
\emptyset $, and therefore $d^{\prime }\leq d$. In order to show $d^{\prime
}\geq d-Rr_{\max }^{k}$, choose $y\in E$ such that $d^{\prime }=|x-y|$ and $%
z\in B^{c}(x,d)\cap E_{\mathbf{i}_{k}^{y}}$. Then, by the triangle
inequality,
\begin{equation*}
d\leq |x-z|\leq |x-y|+|y-z|\leq d^{\prime }+Rr_{\max }^{k}.
\end{equation*}
Finally, the inequality $\mu _{k}(B(x,d))\geq \mu (B(x,d^{\prime }))$
follows because $B(x,d^{\prime })\cap E\subset \{y\in E:E_{\mathbf{i}%
_{k}^{y}}\subset B(x,d)\}$ and, therefore
\begin{eqnarray*}
\mu (B(x,d^{\prime })) &\leq &\mu (\{y\in E:E_{\mathbf{i}_{k}^{y}}\subset
B(x,d)\}) \\
&=&\mu _{k}(\{y\in A_{k}:E_{\mathbf{i}_{k}^{y}}\subset B(x,d)\})\leq
\mu_{k}(B(x,d)).
\end{eqnarray*}
\end{proof}

\begin{remark}
\noindent

\begin{enumerate}
\item Observe that, if $q_{k}\in \mathbb{N}^{+}$\ is as in Theorem~\ref{main}%
, then $E_{\mathbf{i}_{q_{k}}^{x}}\subset B(x,d)$\ for any $(x,d)\in
A_{k}\times D_{k}^{x}$\ and, therefore,
\begin{equation}
\mu _{k}(B(x,d))\geq \mu (E_{\mathbf{i}_{q_{k}}^{x}})\geq r_{\min }^{q_{k}s}.
\label{bd0}
\end{equation}

\item In the proof of Theorem~\ref{main} we shall use the following result
from \cite{[LLM4]}: Given $a\in (0,\frac{c}{r_{\min }}]$ and $%
(x_{0},d_{0})\in E\times \lbrack ar_{\min },a]$ such that $%
P^{s}(E)=h(x_{0},d_{0})=\frac{(2d_{0})^{s}}{\mu (B(x_{0},d_{0}))}$, then
\begin{equation}
\partial B(x_{0},d_{0})\cap E\neq \emptyset \text{.}  \label{nonemptybdd}
\end{equation}
\end{enumerate}
\end{remark}

Now we are ready to prove our main result.

\begin{proof}[Proof of Theorem~\protect\ref{main}]
We divide the proof into two cases: $P^{s}(E)\geq \tilde{M}_{k}$ and $%
P^{s}(E)\leq \tilde{M}_{k}$.

Let $k\in \mathbb{N}^{+}$. Suppose first that $P^{s}(E)\geq \tilde{M}_{k}$
and let $\mathcal{B}:= \{(x,d): x\in E \text{ and } d \in \lbrack
c-2Rr_{\max}^{k+1},\frac{c-2Rr_{\max}^{k+1}}{r_{\min }} ]\}$. Take $(\tilde{x%
},\tilde{d})\in \mathcal{B}$ such that
\begin{equation}
h(\tilde{x},\tilde{d})=P^{s}(E)=\max \left\{ h(x,d):(x,d)\in \mathcal{B}
\right\}  \label{packelec}
\end{equation}
(see \eqref{packssc2}). By \eqref{nonemptybdd} we know that $\partial B(%
\tilde{x},\tilde{d})\cap E\neq \emptyset $, so we can apply Lemma~\ref%
{aproxball} with $(x,d)=(\tilde{x},\tilde{d})$ and take $(x^{\prime},d^{%
\prime })\in A_{k}\times \lbrack \tilde{d}-2Rr_{\max }^{k},\tilde{d}%
+2Rr_{\max }^{k}]\subset A_{k}\times \lbrack c-2Rr_{\max
}^{k}-2Rr_{\max}^{k+1},\frac{c}{r_{\min }}]$. It is then clear, by %
\eqref{adm} that $0<d^{\prime }\in D_{k}^{x^{\prime }}$, and hence $%
B(x^{\prime },d^{\prime })$ is an admissible ball for the algorithm. This,
together with \eqref{mktildedef}, \eqref{bd0}, \eqref{packelec}, (iii) of
Lemma~\ref{aproxball}, and the mean value theorem, gives
\begin{eqnarray*}
P^{s}(E)-\tilde{M}_{k} &\leq &\frac{(2\tilde{d})^{s}}{\mu (B(\tilde{x},%
\tilde{d}))}-\frac{(2d^{\prime })^{s}}{\mu _{k}(B(x^{\prime },d^{\prime }))}
\\
&\leq &2^{s}\frac{(\tilde{d})^{s}-(\tilde{d}-2Rr_{\max}^{k})^{s}}{\mu
_{k}(B(x^{\prime },d^{\prime }))}\leq \frac{s2^{s+1}RQ}{r_{\min }^{sq_{k}}}%
r_{\max }^{k},
\end{eqnarray*}
where $Q$ is as in \eqref{cte}.

Now, suppose that $P^{s}(E)\leq \tilde{M}_{k}$ and let $(\tilde{x},\tilde{d}%
)\in A_{k}\times D_{k}^{\tilde{x}}$ be such that $\tilde{M}_{k}=h_{k}(\tilde{%
x},\tilde{d})$. Take $d^{\prime }$ as in Lemma~\ref{aproxball2} with $(x,d)=(%
\tilde{x},\tilde{d})$. Then $\mu _{k}(B(\tilde{x},\tilde{d}))\geq \mu (B(%
\tilde{x},d^{\prime }))$ with $(\tilde{x},d^{\prime })\in A_{k}\times
\lbrack \tilde{d}-Rr_{\max }^{k},\tilde{d}]\subset E\times \lbrack
c-3Rr_{\max }^{k}-2Rr_{\max }^{k+1},\frac{c}{r_{\min }}]$. Therefore, by (%
\ref{corol}), with $b=c-3Rr_{\max }^{k}-2Rr_{\max}^{k+1}>0$, we obtain
\begin{equation*}
h(\tilde{x},d^{\prime })\leq P^{s}(E)=\max \left\{ h(x,d):x\in E,d\in
\lbrack c-3Rr_{\max }^{k}-2Rr_{\max }^{k+1},\frac{c}{r_{\min }}]\right\} .
\end{equation*}
All this, together with the mean value theorem, gives

\begin{gather}
\tilde{M}_{k}-P^{s}(E)\leq \frac{(2\tilde{d})^{s}}{\mu _{k}(B(\tilde{x},
\tilde{d}))}-\frac{(2d^{\prime })^{s}}{\mu (B(\tilde{x},d^{\prime }))}
\notag \\
\leq 2^{s}\frac{\tilde{d}^{s}-(\tilde{d}-2Rr_{\max }^{k})^{s}}{\mu _{k}(B(
\tilde{x},\tilde{d}))}\leq \frac{s2^{s+1}RQ}{r_{\min}^{sq_{k}}}r_{\max }^{k},
\notag
\end{gather}
where $Q$ is as in \eqref{cte}. This concludes the proof of the theorem.
\end{proof}

\subsection{Examples\label{packex}}

Algorithm~\ref{case1} was tested in \cite{[LLM4]} with various different
classes of self-similar sets, including those previously studied in the
literature for which the value of the packing measure was known. Next, we
are going to analyze these same classes utilizing the point of view provided
by Theorem~\ref{main}. This allows, on the one hand, to obtain automatically
estimates for the results conjectured in \cite{[LLM4]} and, on the other
hand, to test the effectiveness of the algorithm when the exact value of the
packing measure is known. Moreover, we study some self-similar sets $E$ for
which, although the value of $P^{s}(E)$ is unknown, a conjecture can be
made. In these cases, the algorithm's output provides, for every $k\leq
k_{\max }$, an estimate $\mathbf{\tilde{M}}_{k}$ of the value of $P^{s}(E)$
and $100\%$ confidence intervals $I_{k}:=[\tilde{M}_{k}-\varepsilon _{k},
\tilde{M}_{k}+\varepsilon _{k}]$ (see Theorem~\ref{main}). This fact allows
us to reject the hypothesis $\alpha =P^{s}(E)$ when $\alpha \notin I_{k}$ .
If $\alpha \in I_{k}$, then the hypothesis cannot be ruled out as $%
\left\vert P^{s}(E)-\alpha \right\vert \leq 2\mathbf{\varepsilon }_{k}$ is
guaranteed. Here $k_{\max }$ denotes the iteration on which the algorithm's
run was stopped.

The results presented in Tables~\eqref{tabcatr}, \eqref{tabsier}, %
\eqref{times}, and \eqref{tabcant}, include, for completeness, the values of
the constants $s$, $q_{k_{\max }}$, $Q$, and $\mathbf{\varepsilon}_{k_{\max
}}$ involved in Theorem~\ref{main}. We note that, although all the
computations have been made using double precision arithmetic, the number of
decimal places displayed for the values of $Q$, $\mathbf{\varepsilon }%
_{k_{\max }}$, and $\mathbf{\tilde{M}}_{k_{\max }}$ has been reduced in
order to simplify the presentation

An interesting feature of the algorithm is that in some cases the output
stabilizes at an early iteration. The parameter $k_{stb}$ has been included
in the tables to indicate the iteration at which the algorithm output
stabilizes in the sense that, for all $k\in \lbrack k_{stb}, k_{\max }]\cap
\mathbb{N}^{+}$, there holds $\tilde{M}_{k}=\tilde{M}_{k_{stb}}$, after
rounding to $14 $ decimal places. \newline

The computer codes were written in Fortran 90. They were run on the HPC of
the Complutense University of Madrid (see
www.campusmoncloa.es/es/infraestructuras/eolo for technical description).

\begin{enumerate}
\item \textbf{Cantor sets in the real line}

Let $C_{r}$ be the linear Cantor set obtained as the attractor of the
iterated function system
\begin{equation}
\{f_{1}(x)=rx,\text{ \ }f_{2}(x)=1-r+rx\},\quad x\in \lbrack 0,1]\text{ and }
0<r<\frac{1}{2}.  \label{canreal}
\end{equation}

We know by \cite{[Fen0]} that, for all $r\in (0,\frac{1}{2})$,
\begin{equation}
P^{s}(C_{r})=\left( 2\frac{1-r}{r}\right) ^{s},  \label{cantconj}
\end{equation}
where $s=-\frac{\log 2}{\log r}$ is the similarity dimension of $C_{r}$.
Moreover, Algorithm~\ref{case1} was implemented in \cite{[LLM4]} for the
family $C_{r}$, yielding outputs that coincide with the corresponding values
given by \eqref{cantconj} (see $\tilde{M}_{20}$ in Table~\ref{tabcatr}). We
applied \eqref{gnrlbdd} to the class $C_{r}$ in order to check the
effectiveness of the bounds given by Theorem~\ref{main}. The results for the
final iteration $k_{\max }=20$ of the algorithm are presented in Table~\ref%
{tabcatr}.
\begin{table}[]
\begin{center}
\begin{tabular}{|c|c|c|c|c|c|c|}
\hline
$\mathbf{r}$ & $\mathbf{s}$ & $\mathbf{q_{20}}$ & $\mathbf{Q}$ & $\mathbf{%
\tilde{M}_{20}=P^{s}}\mathbf{({C}_{r})}$ & $\mathbf{\varepsilon_{20}}$ & $%
\mathbf{k_{stb}}$ \\ \hline
$1/4$ & $\frac{\log 2}{\log 4}$ & $1$ & $1.41421$ & $2.449489742783$ & $%
3.63798\times 10^{-12}$ & $2$ \\ \hline
$1/3$ & $\frac{\log 2}{\log 3}$ & $1$ & $1.50000$ & $2.398046289121$ & $%
1.68126\times 10^{-9}$ & $2$ \\ \hline
$0.38$ & $-\frac{\log 2}{\log 0.38}$ & $2$ & $1.49896$ & $2.333213028519$ & $%
5.56338\times 10^{-8}$ & $3$ \\ \hline
$0.383$ & $-\frac{\log 2}{\log 0.383}$ & $2$ & $1.49695$ & $2.327991242710$
& $6.58217\times 10^{-8}$ & $3$ \\ \hline
$0.45$ & $-\frac{\log 2}{\log 0.45}$ & $3$ & $1.35502$ & $2.172506324847$ & $%
3.98266\times 10^{-6}$ & $8$ \\ \hline
\end{tabular}%
\end{center}
\caption{Linear Cantor sets $C_{r}$. }
\label{tabcatr}
\end{table}

Observe that for these examples the accuracies of the values of $%
P^{s}(C_{r}) $ vary from ten to four decimal places (in the worse case):
\begin{eqnarray*}
\mathbf{2.4494897427}79 &\leq &P^{s}(C_{0.25})\leq \mathbf{2.4494897427}87,
\\
\mathbf{2.3980462}87 &\leq &P^{s}(C_{1/3})\leq \mathbf{2.3980462}91, \\
\mathbf{2.33321}29 &\leq &P^{s}(C_{0.38})\leq \mathbf{2.33321}31, \\
\mathbf{2.327991}17 &\leq &P^{s}(C_{0.383})\leq \mathbf{2.327991}31, \\
\mathbf{2.1725}02 &\leq &P^{s}(C_{0.45})\leq \mathbf{2.1725}11.
\end{eqnarray*}

\item \textbf{Sierpinski gaskets}

Let $S_{r}$ be the \emph{self-similar set} associated to the system $\Psi
=\{f_{1,}f_{2,}f_{3}\}$ where%
\begin{eqnarray}
f_{1}(\mathbf{x}) &=&r\mathbf{x},  \label{sierclas} \\
f_{2}(\mathbf{x}) &=&r\mathbf{x}+(1-r,0),  \notag \\
f_{3}(\mathbf{x}) &=&r\mathbf{x}+(1-r)\left( \frac{1}{2},\frac{\sqrt{3}}{2}
\right) ,  \notag
\end{eqnarray}
for $r\in (0,1)$ and $\mathbf{x}\in \mathcal{\mathbb{R}}^{2}$. If $r\in (0,%
\frac{1}{2})$, then $S_{r}$ is a Sierpinski gasket of similarity dimension $%
s=-\frac{\log 3}{\log r}$ satisfying the SSC.

By \cite{[LLM4]} we know that, for all $r\in (0,\frac{1}{3}]$,
\begin{equation}
P^{s}(S_{r})=g_{1}(r),\qquad \text{ where }g_{1}(r):=\left( 2\frac{1-r}{r}
\right) ^{s}\text{.}  \label{sierfor}
\end{equation}
The results presented in Table~\ref{tabsier} show that Theorem~\ref{main} in
combination with Algorithm~\ref{case1} provides quite complete information
on the packing measure of the family $\{S_{r}\}_{r\in (0,\frac{1}{2})}$:
When $r\in (0,\frac{1}{3}]$, Algorithm~\ref{case1} recovers the value of $%
P^{s}(S_{r})$, and when $r>\frac{1}{3}$, Theorem~\ref{main} provides an
approximate value for $P^{s}(S_{r})$.

In order to analyze the behavior of the family $S_{r}$ with respect to %
\eqref {sierfor}, Table~\ref{tabsier} is divided into three cases: $r\leq
1/3 $, where \eqref{sierfor} holds; $r>\frac{1}{3}$ and $g_{1}(r)\in
I_{k_{\max }}$ , where the supposition \eqref{sierfor} cannot be rejected as
$\left\vert P^{s}(E)-g_{1}(r)\right\vert \leq 2\mathbf{\varepsilon }%
_{k_{\max }}$ is guaranteed; and $r$ satisfying $g_{1}(r)\notin I_{k_{\max
}} $, where (\ref{sierfor}) can be ruled out. For completeness, Table~\ref%
{tabsier} includes the values of $g_{1}(r)$ as well.

\begin{table}[]
\begin{tabular}{|c|c|c|c|c|c|c|c|c|}
\hline
$\mathbf{r \leq \frac{1}{3}}$ & $\mathbf{s\leq 1}$ & $\mathbf{k_{\max }}$ & $%
\mathbf{q_{k_{\max }}}$ & $\mathbf{Q}$ & $\mathbf{\tilde{M}_{k_{\max
}}=P^{s}(K_{r})=g_{1}(r)}$ & $\mathbf{\varepsilon_{k_{\max }}}$ & $\mathbf{%
g_{1}(r)}$ & $\mathbf{k_{stb}}$ \\ \hline
\multicolumn{1}{|c|}{$1/27$} & \multicolumn{1}{|c|}{$\frac{\log 3}{\log 27}$}
& \multicolumn{1}{|c|}{$10$} & \multicolumn{1}{|c|}{$1$} &
\multicolumn{1}{|c|}{$1.05265$} & \multicolumn{1}{|c|}{$3.732511156817$} &
\multicolumn{1}{|c|}{$1.28830\times 10^{-14}$} & \multicolumn{1}{|c|}{$%
3.732511156817$} & $2$ \\ \hline
\multicolumn{1}{|c|}{$0.2$} & \multicolumn{1}{|c|}{$-\frac{\log 3}{\log 0.2}$%
} & \multicolumn{1}{|c|}{$12$} & \multicolumn{1}{|c|}{$1$} &
\multicolumn{1}{|c|}{$1.17602$} & \multicolumn{1}{|c|}{$4.134802967588$} &
\multicolumn{1}{|c|}{$3.16650\times 10^{-8}$} & \multicolumn{1}{|c|}{$%
4.134802967588$} & $2$ \\ \hline
\multicolumn{1}{|c|}{$1/4$} & \multicolumn{1}{|c|}{$\frac{\log 3}{\log 4}$}
& \multicolumn{1}{|c|}{$12$} & \multicolumn{1}{|c|}{$1$} &
\multicolumn{1}{|c|}{$1.15470$} & \multicolumn{1}{|c|}{$4.136854781603$} &
\multicolumn{1}{|c|}{$5.66827\times 10^{-7}$} & \multicolumn{1}{|c|}{$%
4.136854781603$} & $2$ \\ \hline
\multicolumn{1}{|c|}{$0.33$} & $-\frac{\log 3}{\log 0.33}$ & $12$ & $1$ & $%
1.00983$ & $4.009348546810$ & $1.99027\times 10^{-5}$ & \multicolumn{1}{|c|}{%
$4.009348546810$} & $3$ \\ \hline
\multicolumn{1}{|c|}{$1/3$} & \multicolumn{1}{|c|}{$1$} &
\multicolumn{1}{|c|}{$12$} & \multicolumn{1}{|c|}{$1$} &
\multicolumn{1}{|c|}{$1$} & \multicolumn{1}{|c|}{$4$} & \multicolumn{1}{|c|}{%
$2.25801\times 10^{-5}$} & $4$ & $2$ \\ \hline
$\mathbf{r>\frac{1}{3}}$ & \multicolumn{1}{|c|}{$\mathbf{s>1}$} &
\multicolumn{1}{|c|}{} & \multicolumn{1}{|c|}{} & \multicolumn{1}{|c|}{} &
\multicolumn{1}{|c|}{$\mathbf{\tilde{M}_{k_{\max }}},\ $ $\mathbf{%
g_{1}(r)\in I_{k_{\max }}}$} & \multicolumn{1}{|c|}{} &  &  \\ \hline
\multicolumn{1}{|c|}{$0.335$} & $-\frac{\log 3}{\log 0.335}$ & $12$ & $2$ & $%
0.99993$ & $3.995192673194$ & $7.24702\times 10^{-5}$ & $3.995192673194$ & $%
3 $ \\ \hline
\multicolumn{1}{|c|}{$0.36$} & \multicolumn{1}{|c|}{$-\frac{\log 3}{\log 0.36%
}$} & \multicolumn{1}{|c|}{$12$} & \multicolumn{1}{|c|}{$2$} &
\multicolumn{1}{|c|}{$0.98125$} & \multicolumn{1}{|c|}{$3.912076663518$} &
\multicolumn{1}{|c|}{$1.89640\times 10^{-4}$} & $3.912076663518$ & $3$ \\
\hline
\multicolumn{1}{|c|}{$0.365$} & $-\frac{\log 3}{\log 0.365}$ & $12$ & $2$ & $%
0.97322$ & $3.892897543783$ & $2.27289\times 10^{-4}$ & $3.892890309768$ & $%
3 $ \\ \hline
\multicolumn{1}{|c|}{$0.37$} & $-\frac{\log 3}{\log 0.37}$ & $12$ & $2$ & $%
0.96364$ & $3.872834140179$ & $2.71384\times 10^{-4}$ & $3.872817437454$ & $%
>k_{\max }$ \\ \hline
\multicolumn{1}{|c|}{$0.385$} & $-\frac{\log 3}{\log 0.385}$ & $12$ & $2$ & $%
0.92517$ & $3.807311991619$ & $4.51402\times 10^{-4}$ & $3.807142406190$ & $%
>k_{\max }$ \\ \hline
& \multicolumn{1}{|c|}{} & \multicolumn{1}{|c|}{} & \multicolumn{1}{|c|}{} &
\multicolumn{1}{|c|}{} & \multicolumn{1}{|c|}{$\mathbf{\tilde{M}_{k_{\max }}}%
, \ $ $\mathbf{g_{1}(r)\notin I_{k_{\max }}}$} & \multicolumn{1}{|c|}{} &  &
\\ \hline
\multicolumn{1}{|c|}{$0.39$} & \multicolumn{1}{|c|}{$-\frac{\log 3}{\log
0.39 }$} & \multicolumn{1}{|c|}{$13$} & \multicolumn{1}{|c|}{$2$} &
\multicolumn{1}{|c|}{$0.90895$} & \multicolumn{1}{|c|}{$3.783682419751$} &
\multicolumn{1}{|c|}{$2.06943\times 10^{-4}$} & $3.783386572225$ & $>k_{\max
}$ \\ \hline
\multicolumn{1}{|c|}{$0.42$} & \multicolumn{1}{|c|}{$-\frac{\log 3}{\log
0.42 }$} & \multicolumn{1}{|c|}{$15$} & \multicolumn{1}{|c|}{$3$} &
\multicolumn{1}{|c|}{$0.77328$} & \multicolumn{1}{|c|}{$3.629197993783$} &
\multicolumn{1}{|c|}{$2.83972\times 10^{-4}$} & $3.620358378152$ & $>k_{\max
}$ \\ \hline
\end{tabular}%
\caption{Sierpinski gaskets $\{S_{r}\}$}
\label{tabsier}
\end{table}

The packing measure bounds obtained from applying Theorem~\ref{main} to the
preceding examples are:\newline
\begin{eqnarray}
\mathbf{3.7325111568172}35 &\leq &P^{s}(S_{\frac{1}{27}})\leq \mathbf{%
3.7325111568172}62 ,  \notag \\
\mathbf{4.1348029}359 &\leq &P^{s}(S_{\frac{2}{10}})\leq \mathbf{4.1348029}
993,  \notag \\
\mathbf{4.13685}421 &\leq &P^{s}(S_{\frac{1}{4}})\leq \mathbf{4.13685}535,
\notag \\
\mathbf{4.0093}286 &\leq &P^{s}(S_{0.33})\leq \mathbf{4.0093}685,  \notag \\
3.9999774 &\leq &P^{s}(S_{\frac{1}{3}})\leq 4.0000226,  \notag \\
\mathbf{3.995}120 &\leq &P^{s}(S_{0.335})\leq \mathbf{3.995}266,  \notag \\
\mathbf{3.91}188 &\leq &P^{s}(S_{0.36})\leq \mathbf{3.91}227,  \notag \\
\mathbf{3.89}267 &\leq &P^{s}(S_{0.365})\leq \mathbf{3.89}313,  \notag \\
\mathbf{3.87}256 &\leq &P^{s}(S_{0.37})\leq \mathbf{3.87}311,  \notag \\
\mathbf{3.80}686 &\leq &P^{s}(S_{0.385})\leq \mathbf{3.80}777,  \notag \\
\mathbf{3.783}475 &\leq &P^{s}(S_{0.39})\leq \mathbf{3.783}890,
\label{S039bd} \\
\mathbf{3.62}891 &\leq &P^{s}(S_{0.42})\leq \mathbf{3.62}949.  \label{S042bd}
\end{eqnarray}
In view of Table~\ref{tabsier}, \eqref{sierfor} might also hold for $r$ in
some subinterval of $[\frac{1}{3},0.365)$. In particular, when the value of $%
r$ is one of $0.335$, $0.36$, $0.365$, $0.37$, and $0.385$, the algorithm
output approximates the value of $g_{1}(r)$ to $14$, $15$, $5$, $4$, and $3$
decimal place accuracy, respectively. However, this is not the case for
large $r$, as for $r$ equal to $0.39$ or $0.42$, we have that $%
g_{1}(r)\notin I_{k}$ (see \eqref{S039bd}, \eqref{S042bd} and the values of $%
g_{1}(0.39)$ and $g_{1}(0.42)$ in Table~\ref{tabsier}).

\begin{remark}
\label{timeq} Note that to improve the results for larger $r$, a larger
value of $k_{\max } $ would be required. However, it is necessary to
maintain an equilibrium between the gain in accuracy and the computational
time required (see Table~\ref{times}). The CPU times included in Table~\ref%
{times} are those that were necessary to obtain the values $\mathbf{\tilde{M}%
}_{k}$ for $k\leq k_{\max }$. Observe that the processing time needed for $%
k_{\max }=15$ is significantly bigger than that needed when $k_{\max }=13$
(see Section~\ref{general case packing} for further discussion).

\begin{table}[]
\begin{center}
\begin{tabular}{|c|c|c|c|}
\hline
$\mathbf{k_{\max }}$ & $\mathbf{\tilde{M}_{k_{\max }}}$ & $\mathbf{%
I_{k_{\max }}}$ & $\mathbf{CPU} $ \\ \hline
$11$ & $3.872849586344$ & $\left( \mathbf{3.87}211611,\mathbf{3.87}
358306\right) $ & $10$ minutes \\ \hline
$12$ & $3.872834140179$ & $\left( \mathbf{3.87}256275,\mathbf{3.87}
310553\right) $ & $92$ minutes \\ \hline
$13$ & $3.872826356688$ & $\left( \mathbf{3.872}72594,\mathbf{3.872}
92677\right) $ & $14$ hours and $42$ minutes \\ \hline
$14$ & $3.872821806279$ & $\left( \mathbf{3.872}78465,\mathbf{3.872}
85896\right) $ & $5$ days, $23$ hours, and $25$ minutes \\ \hline
$15$ & $3.872819763461$ & $(\mathbf{3.8728}0601,\mathbf{3.8728}3351)$ & $59$
days \\ \hline
\end{tabular}%
\end{center}
\caption{CPU times for $S_{0.37}$}
\label{times}
\end{table}
\end{remark}

Finally, Figure~\ref{figure} displays the values of $\tilde{M}%
_{k}-\varepsilon_{k}$, $\tilde{M}_{k}$, $\tilde{M}_{k}+\varepsilon_{k}$, and
$g_{1}(r)$ for $34$ equidistant values of $r$ in $[0.33,0.45]$. We have used
$k=12$ for $r\in \lbrack 0.33,0.445]$ and $k=13$ for $r\in \lbrack
0.45,0.495]$. The graphic shows the shape of the curve giving $\tilde{M}_{k}$
as a function of $r$, how the lengths of $I_{k}$ increase with $r$, and also
the differences between $\tilde{M}_{k}$ and $g_{1}(r)$ as functions of $r$.
It also proves that $g_{1}(r)$ is a lower bound for $P^{s}(S_{r})$. This
graph provides a computational alternative when the formula $g_{1}(r)$ is
not applicable.

\begin{figure}[]
\includegraphics[scale=0.6]{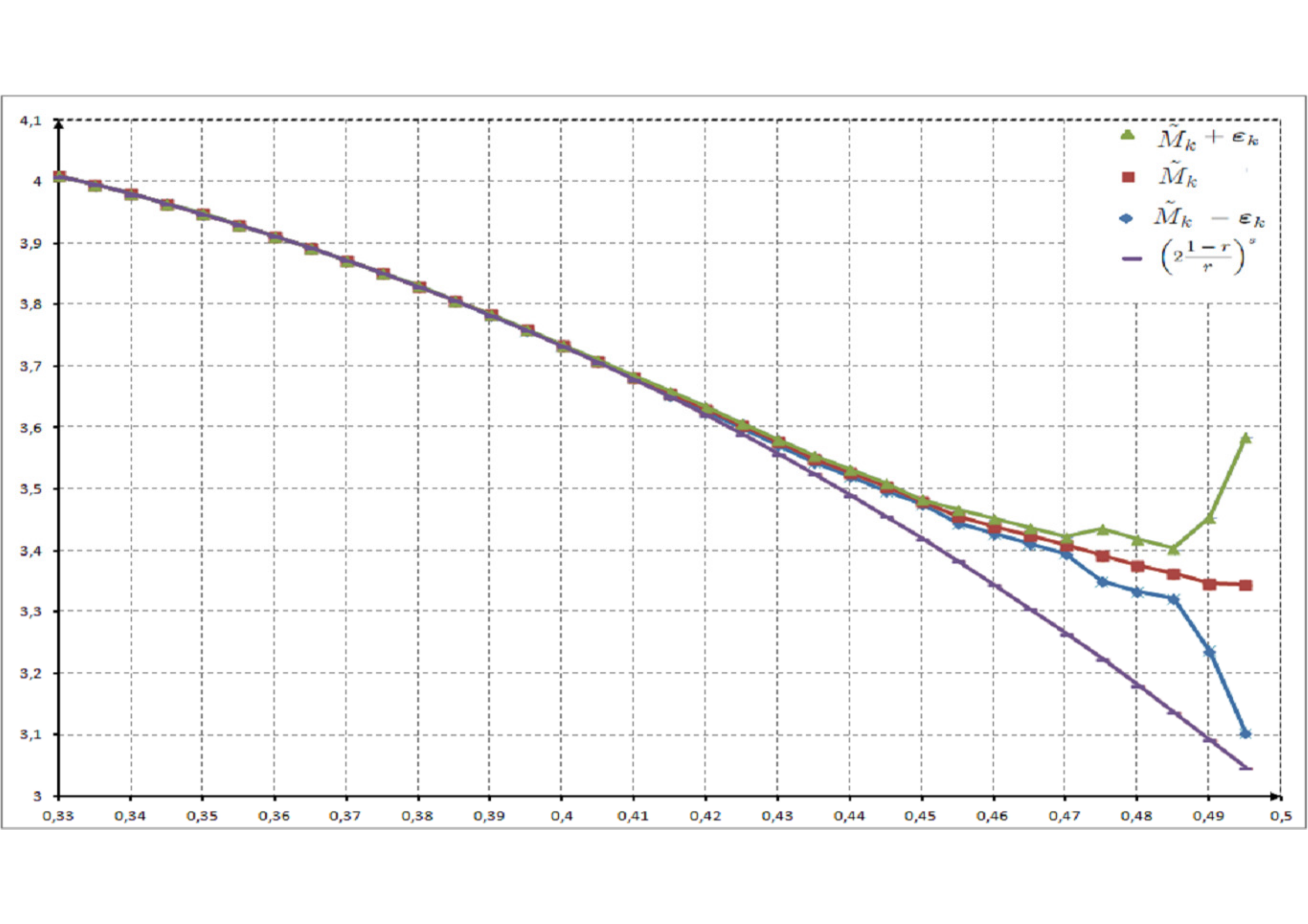}\newline
\caption{Values of $\tilde{M}_{k}-\protect\varepsilon_{k}$, $\tilde{M}_{k}$,
$\tilde{M}_{k}+\protect\varepsilon_{k}$, and $g_{1}(r)$ for $34$ equidistant
values of $r \in [0.33,0.45]$.}
\label{figure}
\end{figure}

\item \textbf{Planar Cantor sets}

Let $K_{r}$ be the attractor associated with the iterated function system $%
\Psi =\{f_{1},f_{2},f_{3},f_{4}\}$ where

\begin{align}
f_{i}(\mathbf{x}) =r\mathbf{x}+b_{i},\qquad i=1,2,3,4,\quad \mathbf{x}\in
\mathbb{R}^{2}\text{,\qquad }0<r<\frac{1}{2},  \label{planrcantor} \\
b_{1} =(0,0)\text{, }\ b_{2}=(1-r,0), \ \ b_{3}=(1-r,1-r), \ \ \text{and}\
b_{4}=(0,1-r).  \notag
\end{align}

Let $g_{2}(r):=\left( 2\frac{1-r}{r}\right)^{s}$ where $s=-\frac{\log 4}{%
\log r}$. In \cite{[LLM4]} it is proved that
\begin{equation}
P^{s}(K_{r})=g_{2}(r)  \label{cantorfor}
\end{equation}
for every $0<r\leq \frac{1}{4}$, and in \cite{[BZ]} the same formula is
shown to be true for $\frac{1}{4}<r<\frac{\sqrt{2}}{4}$.

As in the previous example, Table~\ref{tabcant} is divided into three cases
illustrating the behavior of the family $K_{r}$ with respect to %
\eqref{cantorfor}. When $0<r\leq \frac{\sqrt{2}}{4}$, in all cases the
output coincides at a very early iteration (see $k_{stb}$ in Table~\ref%
{tabcant}) with the corresponding value given by \eqref{cantorfor}. For $%
\frac{\sqrt{2}}{4}<r\leq 0.4$ we observe that $g_{2}(r)\in I_{k_{\max }}$
and thus, although \eqref{cantorfor} is proved only for $0<r\leq \frac{\sqrt{%
2}}{4}$ , this hypothesis cannot be discarded. In these cases we observe a
coincidence between the values given by $\mathbf{\tilde{M}}_{k}$ and $%
g_{2}(r)$ that varies from $12$ to $1$ decimal places. This is not the case
for $r=0.42$, as $g_{2}(r)\notin I_{k}$ and \eqref{cantorfor} can be ruled
out (see Table~\ref{tabcant} and \eqref{cant042}). We can now see the
advantage of combining Algorithm~\ref{case1} and Theorem \ref{main}, as we
are able to obtain an estimate $\mathbf{\tilde{M}}_{k}$ and a $100\%$
confidence interval $I_{k}$ for $P^{s}(K_{r})$ regardless of the existence
of an exact formula (see the estimates below). These examples also show that
$g_{2}(r)$ is a lower bound for $P^{s}(K_{r})$.
\begin{table}[tbp]
\begin{center}
\begin{tabular}{|c|c|c|c|c|c|c|c|c|}
\hline
$\mathbf{r\leq \frac{\sqrt{2}}{4}}$ & $\mathbf{s}$ & $\mathbf{k_{\max }}$ & $%
\mathbf{q_{k_{\max }}}$ & $\mathbf{Q}$ & $\mathbf{\tilde{M}%
_{k_{\max}}=P^{s}(K_{r})=g_{2}(r)}$ & $\mathbf{\varepsilon _{k_{\max }}}$ & $%
\mathbf{g_{2}(r)}$ & $\mathbf{k_{stb}}$ \\ \hline
$0.2$ & $-\frac{\log 4}{\log 0.2}$ & $10$ & $1$ & $1.07339$ & $%
5.996245070706 $ & $1.94598\times 10^{-6}$ & $5.996245070706$ & $2$ \\ \hline
$1/4$ & $1$ & $10$ & $1$ & $1$ & $6$ & $2.15792\times 10^{-5}$ & $6$ & $2$
\\ \hline
$0.3$ & $-\frac{\log 4}{\log 0.3}$ & $11$ & $2$ & $1.04453$ & $%
5.892731803791 $ & $2.14177\times 10^{-4}$ & $5.892731803791$ & $3$ \\ \hline
$0.35$ & $-\frac{\log 4}{\log 0.35}$ & $12$ & $2$ & $0.95179$ & $%
5.656172537869$ & $4.80038\times 10^{-4}$ & $5.656172537869$ & $4$ \\ \hline
$\mathbf{r>\frac{\sqrt{2}}{4}}$ &  &  &  &  & $\mathbf{\tilde{M}_{k_{\max }}}%
,$ $\mathbf{g_{2}(r)\in I_{k_{\max }}}$ &  &  &  \\ \hline
$0.36$ & $-\frac{\log 4}{\log 0.36}$ & $10$ & $2$ & $0.91421$ & $%
5.591584024577$ & $5.25726\times 10^{-3}$ & $5.591584024577$ & $4$ \\ \hline
$0.365$ & $-\frac{\log 4}{\log 0.365}$ & $10$ & $2$ & $0.89297$ & $%
5.557001901721$ & $6.05272\times 10^{-3}$ & $5.557001901721$ & $4$ \\ \hline
$0.37$ & $-\frac{\log 4}{\log 0.37}$ & $10$ & $2$ & $0.87012$ & $%
5.520873608633$ & $6.93986\times 10^{-3}$ & $5.520869632675$ & $4$ \\ \hline
$0.39$ & $-\frac{\log 4}{\log 0.39}$ & $11$ & $2$ & $0.76309$ & $%
5.361914850770$ & $4.47849\times 10^{-3}$ & $5.360487383353$ & $>k_{\max }$
\\ \hline
$0.395$ & $-\frac{\log 4}{\log 0.395}$ & $11$ & $3$ & $0.73263$ & $%
5.320007312123$ & $2.03399\times 10^{-2}$ & $5.316346629766$ & $>k_{\max }$
\\ \hline
$0.4$ & $-\frac{\log 4}{\log 0.4}$ & $12$ & $3$ & $0.70079$ & $%
5.277123200420 $ & $9.18961\times 10^{-3}$ & $5.270557940489$ & $>k_{\max }$
\\ \hline
$\mathbf{r}>\frac{\sqrt{2}}{4}$ &  &  &  &  & $\mathbf{\tilde{M}_{k_{\max }}}%
,$ $\mathbf{g_{2}(r)\notin I_{k_{\max }}}$ &  &  &  \\ \hline
$0.42$ & $-\frac{\log 4}{\log 0.42}$ & $11$ & $3$ & $0.56150$ & $%
5.12798012945 $ & $3.52941\times 10^{-2}$ & $5.070677295108$ & $>k_{\max }$
\\ \hline
\end{tabular}%
\end{center}
\caption{Planar Cantor sets $\{K_{r}\}$ }
\label{tabcant}
\end{table}

The bounds obtained from applying Theorem~\ref{main} to the preceding
examples are:

\begin{align}
\mathbf{5.99624}312& \leq P^{s}(K_{0.2})\leq \mathbf{5.99624}702,  \notag \\
5.99997842& \leq P^{s}(K_{\frac{1}{4}})\leq 6.00002158,  \notag \\
\mathbf{5.892}517& \leq P^{s}(K_{0.3})\leq \mathbf{5.892}946,  \notag \\
\mathbf{5.65}569& \leq P^{s}(K_{0.35})\leq \mathbf{5.65}666,  \notag \\
\mathbf{5.5}854& \leq P^{s}(K_{0.36})\leq \mathbf{5.5}968,  \notag \\
\mathbf{5.5}509& \leq P^{s}(K_{0.365})\leq \mathbf{5.5}631,  \notag \\
\mathbf{5.5}139& \leq P^{s}(K_{0.37})\leq \mathbf{5.5}279,  \notag \\
\mathbf{5.3}574& \leq P^{s}(K_{0.39})\leq \mathbf{5.3}664,  \notag \\
\mathbf{5.}2996& \leq P^{s}(K_{0.395})\leq \mathbf{5.}3404,  \notag \\
\mathbf{5.2}679& \leq P^{s}(K_{0.4})\leq \mathbf{5.2}864,  \notag \\
\mathbf{5}.0927& \leq P^{s}(K_{0.42})\leq \mathbf{5}.16326.  \label{cant042}
\end{align}
\end{enumerate}

\subsection{Computability of the packing measure: the general case\label%
{general case packing}}

A general pattern emerges from the above examples. For self-similar sets
with small contraction ratios there exists a formula that gives the exact
value of the packing measure, and the optimal density is attained for an
optimal ball, which can be found by the algorithm. As the contraction ratios
increase, these statements cease to be valid. This raises the problem of
whether, for a given case, there exists an optimal ball that can be computed
in finite time, and for which the exact value of $P^{s}(E)$ can be computed
to arbitrary accuracy. We say that a self-similar set with these properties
enjoys the finite time computability property.

Several things must happen for the finite time computability property to
hold. The optimal ball $B^{\ast }$ should be centered in one of the clouds $%
A_{k}$, and the boundary of the optimal ball should also lie in some $A_{k}$
. If these two conditions hold the optimal ball can be found in finite time,
but this does not guarantee that the exact value of $P^{s}(E)$ can be
computed in finite time, since the process estimating the exact value of $%
\mu(B^{\ast })$ can be infinite unless $B^{\ast }$ is a union of a finite
number of cylinders of the $k_{0}$th generation for some $k_{0}$, since then
$\mu (B^{\ast })=\mu _{k}(B^{\ast })$ for $k\geq k_{0}$ (Theorem 4.13 in
\cite{[LLM3]} illustrates this point). All these circumstances should be
considered as exceptional events, unless there were found some rigorous
proof that they must occur. Thus the \textit{general case} should be
considered as noncomputable in finite time.

If the finite time computability property holds in a particular case, a
theoretical argument based on geometric properties can give the exact
packing measure, but, in the general case, the only approach that can be
taken to calculate $P^{s}(E)$ is a computational one.

In order to illustrate the general case we present below Table~\ref{gener}.
It records the intermediate results of the algorithm for a unique
self-similar set, the Sierpinski gasket $S_{0.42}$.

\begin{table}[]
\begin{center}
\begin{tabular}{|c|c|c|c|c|c|}
\hline
$\mathbf{k}$ & $\mathbf{x}$ & $\mathbf{y}$ & $\mathbf{d}$ & $\mathbf{\tilde{M%
}}_{k}$ & $\mathbf{I}_{k}$ \\ \hline
\multicolumn{1}{|l|}{$5$} & \multicolumn{1}{|l|}{$(0,0)$} &
\multicolumn{1}{|l|}{$(0.25915848,0.02694808)$} & \multicolumn{1}{|l|}{$%
0.26055578$} & \multicolumn{1}{|l|}{$3.67050829$} & \multicolumn{1}{|l|}{$%
(2.00793066,\ 5.33308593)$} \\ \hline
\multicolumn{1}{|l|}{$6$} & \multicolumn{1}{|l|}{$(0,0)$} &
\multicolumn{1}{|l|}{$(0.13486912,\ 0.21096379)$} & \multicolumn{1}{|l|}{$%
0.25039050$} & \multicolumn{1}{|l|}{$3.65830695$} & \multicolumn{1}{|l|}{$%
(2.96002434,4.35658956)$} \\ \hline
\multicolumn{1}{|l|}{$7$} & \multicolumn{1}{|l|}{$(0,0)$} &
\multicolumn{1}{|l|}{$(0.24634452,\ 0.00475364)$} & \multicolumn{1}{|l|}{$%
0.24639038$} & \multicolumn{1}{|l|}{$3.64297340$} & \multicolumn{1}{|l|}{$%
(3.34969470,\ 3.93625210)$} \\ \hline
\multicolumn{1}{|l|}{$8$} & \multicolumn{1}{|l|}{$(0,0)$} &
\multicolumn{1}{|l|}{$(0.24590539,\ 0)$} & \multicolumn{1}{|l|}{$0.24590539$}
& \multicolumn{1}{|l|}{$3.63389479$} & \multicolumn{1}{|l|}{$(3.51071773,\
3.75707184)$} \\ \hline
\multicolumn{1}{|l|}{$9$} & \multicolumn{1}{|l|}{$(0,0)$} &
\multicolumn{1}{|l|}{$(0.24519182,\ 0.00275711)$} & \multicolumn{1}{|l|}{$%
0.24520732$} & \multicolumn{1}{|l|}{$3.63071511$} & \multicolumn{1}{|l|}{$%
(3.57898075,\ 3.68244948)$} \\ \hline
\multicolumn{1}{|l|}{$10$} & \multicolumn{1}{|l|}{$(0,0)$} &
\multicolumn{1}{|l|}{$(0.24673228,\ 0.00310930)$} & \multicolumn{1}{|l|}{$%
0.24675187$} & \multicolumn{1}{|l|}{$3.62998849$} & \multicolumn{1}{|l|}{$%
(3.60826005,\ 3.65171693)$} \\ \hline
\multicolumn{1}{|l|}{$11$} & \multicolumn{1}{|l|}{$(0,0)$} &
\multicolumn{1}{|l|}{$(0.24671071,\ 0.00411937)$} & \multicolumn{1}{|l|}{$%
0.24674510$} & \multicolumn{1}{|l|}{$3.62949853$} & \multicolumn{1}{|l|}{$%
(3.62037258,\ 3.63862448)$} \\ \hline
\multicolumn{1}{|l|}{$12$} & \multicolumn{1}{|l|}{$(0,0)$} &
\multicolumn{1}{|l|}{$(0.12700823,\ 0.21145014)$} & \multicolumn{1}{|l|}{$%
0.24666223$} & \multicolumn{1}{|l|}{$3.62928853$} & \multicolumn{1}{|l|}{$%
(3.62545563,\ 3.63312143)$} \\ \hline
\multicolumn{1}{|l|}{$13$} & \multicolumn{1}{|l|}{$(0,0)$} &
\multicolumn{1}{|l|}{$(0.24666388,\ 0.00281924)$} & \multicolumn{1}{|l|}{$%
0.24667999$} & \multicolumn{1}{|l|}{$3.62921523$} & \multicolumn{1}{|l|}{$%
(3.62760541,\ \ 3.63082505)$} \\ \hline
\multicolumn{1}{|l|}{$14$} & \multicolumn{1}{|l|}{$(0,0)$} &
\multicolumn{1}{|l|}{$(0.24663898,\ 0.00411937)$} & \multicolumn{1}{|l|}{$%
0.24667338$} & \multicolumn{1}{|l|}{$3.62921324$} & \multicolumn{1}{|l|}{$%
(3.62853711,\ \ 3.62988937)$} \\ \hline
\multicolumn{1}{|l|}{$15$} & \multicolumn{1}{|l|}{$(0,0)$} &
\multicolumn{1}{|l|}{$(0.24663671,\ 0.00424755)$} & \multicolumn{1}{|l|}{$%
0.24667328$} & \multicolumn{1}{|l|}{$3.62919799$} & \multicolumn{1}{|l|}{$%
(3.62891402,\ 3.62948197)$} \\ \hline
\end{tabular}%
\end{center}
\caption{Sierpinski gasket $S_{0.42}$}
\label{gener}
\end{table}

The columns in this table are: the number $k$ of iterations, the center $x$
and endpoint $y$ of the optimal ball at the $k$th iteration, the radius $d$,
the estimate $\tilde{M}_{k}$ for $P^{s}(S_{0.42})$ at the $k$th iteration,
and the interval $I_{k}$ to which we can be sure that $P^{s}(S_{0.42})$
belongs. For simplicity all the values are rounded to eight decimal places.
One can see that, in spite of the stabilization of $x$, the remaining values
change from iterate to iterate until the computational time is too big to
continue.

\begin{remark}
The case $r=0.42$ is a good example illustrating the frontiers of
computability of the packing measure. We needed $k_{\max }=15$ to obtain
only two digits of accuracy in the estimate of $P^{s}(S_{0.42})$. It is
difficult to increase $k_{\max }$ because the set $A_{15}$ consists of $%
3^{15}=14348907$ data points, and for this value the computation required
more than one month of CPU time.
\end{remark}

\section{Centered Hausdorff measure}

The centered Hausdorff measure is a variant of the Hausdorff measure. The
main difference between them is the nature of the coverings used in their
definitions. In the case of the centered Hausdorff measure the set of
coverings is restricted to closed balls centered at points in the given set
(see, e.g. \cite{[TC]}, for the standard definition and properties).
However, here, instead of the standard definition of $C^{s}$ we use the
following relation proved in \cite{[LLM1]} for totally disconnected
self-similar sets:
\begin{equation}
C^{s}(E)=\min \left\{ \bar{h}(x,d):x\in E\text{ \ and }c\leq d\leq R\right\}
\label{censsc}
\end{equation}
(see Section~\ref{intro} for the notational conventions). This can be
improved to

\begin{equation}
C^{s}(E)=\min \left\{ \bar{h}(x,d):(x,d)\in \mathcal{A}\right\} ,
\label{censsc2}
\end{equation}
with
\begin{equation}
\mathcal{A}=\left\{ (x,d)\in E\times \lbrack c,R]:\bar{B}(x,d)\cap E_{j}\neq
\emptyset \text{ for some }j\in M\text{ with }j\neq i_{1}^{x}\right\} .
\label{censsc3}
\end{equation}
This is so because, as argued in \cite[Remark 6]{[LLM1]}, any ball $B(x,d)$
with $B(x,d)\cap E\subset E_{i_{1}^{x}}$ can be enlarged to a ball $%
B(f_{i_{1}^{x}}^{-1}(x), \frac{d}{r_{i_{1}^{x}}})$ with $\bar{h}(x,d)=\bar{h}%
(f_{i_{1}^{x}}^{-1}(x), \frac{d}{r_{i_{1}^{x}}})$. The inequality $d<c$
ensures that $B(x,d)\cap E\subset E_{i_{1}^{x}}$ holds true, but, even if $%
d\geq c$, it might happen that $B(x,d)$ could still be enlarged. The
condition used in the above definition of $\mathcal{A}$ rules out, however,
any further enlargement of $B(x,d)$.

Similarly to the packing measure case, \eqref{censsc2} allows the
construction of an algorithm converging to the value\ of $C^{s}(E)$ through
an approximation of the minimal value of $\bar{h}(x,d):=\frac{(2d)^{s}}{\mu
( \bar{B}(x,d))}$. Observe that we are taking closed balls in the definition
of $\bar{h}(x,d)$ instead of the open ones used in the packing measure case.
Nevertheless, replacing open balls with closed balls in (\ref{censsc}) does
not make any difference in the limit as, by \cite{[MT]}, we know that $\mu
(\partial B(x,d))=0$. Moreover, using closed balls has proved to be
computationally more efficient.

\subsection{Previous results: centered Hausdorff measure algorithm\label%
{recallch}}

This section describes an improved version of the algorithm developed in
\cite{[LLM3]} for computing the centered Hausdorff measure of totally
disconnected self-similar sets. This new version has two novelties. On the
one hand, it allows a reduction of the number of calculations needed on each
step, making the algorithm faster at the expense of using more memory by
caching part of the calculations made in prior iterations instead of
recalculating them on each iteration. On the other hand, it uses the more
efficient condition \eqref{censsc2} in place of \eqref{censsc}.

As the structure of Algorithm~\ref{algocent} is very similar to that of the
algorithm for the packing measure, we start the description supposing that $%
A_{k}$, $\Delta _{k}$ and $\mu_{k}$ have already been constructed, so that
we can see the differences between the two algorithms.

\begin{algorithm}[Centered Hausdorff measure]
\label{algocent}\textbf{Input of the Algorithm}: The system of contracting
similitudes and the iteration $k_{\max}$ on which the algorithm's run is
stopped.

We begin the description of the algorithm with step 4 as the construction of
$A_{k}$, $\mu_{k}$, and the list of distances is the same as in the packing
measure case. Thus, assume steps 1, 2, and 3 are as in Algorithm~\ref{case1}%
, and let $k\in\mathbb{N}^+$ such that $k\le k_{\max}$.

\begin{enumerate}
\item[4] \textbf{Construction of }$\mathbf{\tilde{m}}_{k}$.

Given $x\in A_{k}$:

\begin{itemize}
\item[4.1] Rank in increasing order those distances $d\in \Delta _{k}$ that
contain the letter $\mathbf{i}_{k}^{x}$ in their addresses (see \eqref{dist}
for the notation).

\item[4.2] Let $0=d_{1}^{x}\leq d_{2}^{x}\leq \dots \leq d_{m^{k}}^{x}$ be
the list of ordered distances and, for every $j\in \{1,\dots ,m^{k}\}$, let $%
t_{j}\in \mathbb{N} $\ be such that $d_{j}^{x}=\dots=d_{j+t_{j}}^{x}\neq
d_{j+t_{j}+1}^{x}$. Then,
\begin{equation}
\mu_{k}(\bar{B}(x,d_{j}^{x})):=\sum_{q=1}^{j+t_{j}}r_{\mathbf{i}%
_{k}^{x_{q}}}^{s},  \label{mudifball}
\end{equation}
where $x_{q}\in A_{k}$ is the point chosen for calculating the distance
\textbf{\ }$d_{q}^{x}=d(x,x_{q})$, $q=1,\dots,j+t_{j}$.

Observe that, in the particular case when $r_{i}=r$ for all $i\in M$, we
have
\begin{equation*}
\mu _{k}(B(x,d_{j}^{x}))=\frac{j+t_{j}}{m^{k}}.
\end{equation*}

\item[4.3] Compute
\begin{equation}
\bar{h}_{k}(x,d_{j}^{x}):=\frac{(2d_{j}^{x})^{s}}{\mu_{k}(\bar{B}%
(x,d_{j}^{x}))}= \frac{(2d_{j}^{x})^{s}}{\sum_{q=1}^{j+t_{j}}r_{\mathbf{i}%
_{k}^{x_{q}}}^{s}}  \label{hbk}
\end{equation}
\textbf{only} for those distances $d_{j}^{x}$ in the list satisfying $%
j_{0}^{x}\leq j\leq m^{k}$, where
\begin{equation}
j_{0}^{x}=\min \{j\in \{1,\dots,m^{k}\}:d_{j}^{x}=|x-y|\text{ for some }y\in
A_{k}\text{ with }i_{1}^{x}\neq i_{1}^{y}\}\text{,}  \label{condition}
\end{equation}
according to \eqref{censsc2} and \eqref{censsc3}.

Henceforth we use the following notation. Given $k\in \mathbb{N}^{+}$ and $%
x\in A_{k}$, we define
\begin{eqnarray*}
\bar{D}_{k}^{x} :=\cup _{j=j_{0}^{x}}^{m^{k}}d_{j}^{x}
\end{eqnarray*}
and
\begin{eqnarray*}
\bar{D}_{k} :=\cup _{x\in A_{k}}\bar{D}_{k}^{x}.
\end{eqnarray*}

Observe that $\bar{D}_{k}$ takes values only within the interval $[c,R]$.

\item[4.4] Find the minimum
\begin{equation*}
\bar{m}_{k}(x)=\min \{\bar{h}_{k}(x,d_{j}^{x}):j=j_{0},\dots,m^{k}\}
\end{equation*}
of the values computed in step 4.3.
\end{itemize}

\item[5] Repeat step 4 for each $x\in A_{k}$.

\item[6] Take the minimum
\begin{equation}
\tilde{m}_{k}:=\min \{\bar{m}_{k}(x):x\in A_{k}\}  \label{mk}
\end{equation}
of the $m^{k}$ values computed in step 5. Note that
\begin{equation}
\tilde{m}_{k}:=\bar{h}_{k}(\tilde{x}_{k},\tilde{d}_{k}):= \frac{(2\tilde{d}%
_{k})^{s}}{\mu_{k}(\bar{B}(\tilde{x}_{k},\tilde{d}_{k}))}= \min_{x\in
A_{k}}\min_{d\in \bar{D}_{k}^{x}}\bar{h}_{k}(x,d).  \label{seal}
\end{equation}
\end{enumerate}
\end{algorithm}

\begin{remark}
\noindent

\begin{enumerate}
\item Note that, by \eqref{condition}, for any $x\in A_{k}$,
\begin{eqnarray}
d &\in &\bar{D}_{k}^{x}\iff d=|x-y|\text{ for some }y\in A_{k}\text{ and }
\label{condi} \\
&&\text{there exists }z\in A_{k}\cap \bar{B}(x,d)\text{ with }i_{1}^{z}\neq
i_{1}^{x}.  \notag
\end{eqnarray}

\item Let $q\in \mathbb{N}^{+}$ be such that $Rr_{\max }^{q}\leq c<Rr_{\max
}^{q-1}$. Then, for any $k\in \mathbb{N}^{+}$ and $(x,d)\in A_{k} \times
\bar{D}_{k}^{x}$, there holds $E_{\mathbf{i}_{q}^{x}}\subset \bar{B}(x,d)$,
whence
\begin{equation}
\mu _{k}(\bar{B}(x,d))\geq \mu (E_{\mathbf{i}_{q}^{x}})\geq r_{\min }^{qs}.
\label{bd0c}
\end{equation}
\end{enumerate}
\end{remark}

\subsection{Rate of convergence of the centered Hausdorff measure algorithm
\label{ratehc}}

This section is devoted to showing the rate of convergence of Algorithm~\ref%
{algocent}. Since the proof of Theorem~\ref{mainc} does not use the
convergence $C^{s}(E)=\lim_{k\rightarrow \infty}\tilde{m}_{k}$, this gives
an alternative proof of the convergence of Algorithm~\ref{algocent}.

As in Section~\ref{ratepac}, we show first that the construction of
appropriate approximating balls allows a comparison between the densities
given in \eqref{censsc2} with those computed by Algorithm~\ref{algocent}.
The proof of Theorem~\ref{mainc} is postponed to the end of the section.

\begin{lemma}
\label{aproxballc}Given $k\in\mathbb{N}^{+}$ and $(x,d)\in \mathcal{A}$
there exists $(x^{\prime },d^{\prime })\in A_{k}\times \bar{D}%
_{k}^{x^{\prime }}$ such that

\begin{enumerate}
\item[(i)] $|x-x^{\prime }|\leq Rr_{\max }^{k}$,

\item[(ii)] $d^{\prime }\leq d+2Rr_{\max }^{k}$,

\item[(iii)] $\mu (\bar{B}(x,d))\leq \mu _{k}(\bar{B}(x^{\prime },d^{\prime
}))$.
\end{enumerate}
\end{lemma}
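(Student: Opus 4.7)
The plan is to mimic the strategy of Lemma~\ref{aproxball}, but reversing the direction of the measure inequality, since here I need an \emph{upper} bound on $\mu(\bar B(x,d))$ in terms of $\mu_k$ rather than a lower one. As before, I would take $x'$ to be the unique point of $A_k$ lying in the cylinder $E_{\mathbf{i}_k^x}$, so that (i) holds automatically. The difficulty in producing (iii) is that a cylinder $E_{\mathbf{i}_k^y}$ may only partially intersect $\bar B(x,d)$, while on the discrete side the whole mass $r_{\mathbf{i}_k^y}^s$ sits at the point $y$; hence the approximating ball $\bar B(x',d')$ must be chosen large enough to capture the $A_k$-representative of \emph{every} cylinder that meets $\bar B(x,d)$.

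Accordingly, I would set $L:=\{y\in A_k : E_{\mathbf{i}_k^y}\cap \bar B(x,d)\neq\emptyset\}$ and define $d':=\max\{|x'-y|:y\in L\}$, which is well defined since $L$ is nonempty (it contains $x'$). By construction $L\subseteq \bar B(x',d')\cap A_k$, and since distinct cylinders $E_{\mathbf{i}_k^y}$ are $\mu$-disjoint one gets
\[
\mu(\bar B(x,d))\leq \sum_{y\in L}\mu(E_{\mathbf{i}_k^y})=\sum_{y\in L}r_{\mathbf{i}_k^y}^{s}=\mu_k(L)\leq \mu_k(\bar B(x',d')),
\]
proving (iii). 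For (ii), pick $y\in L$ attaining the maximum and $z\in E_{\mathbf{i}_k^y}\cap \bar B(x,d)$; then the triangle inequality
\[
|x'-y|\leq |x'-x|+|x-z|+|z-y|\leq Rr_{\max}^{k}+d+Rr_{\max}^{k}
\]
gives $d'\leq d+2Rr_{\max}^{k}$.

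The step I expect to require the most care is verifying that $d'\in \bar D_{k}^{x'}$. By construction $d'=|x'-y|$ for some $y\in A_k$, so according to \eqref{condi} the only nontrivial point is to exhibit some $z\in A_k\cap \bar B(x',d')$ with $i_1^z\neq i_1^{x'}$. This is precisely where the hypothesis $(x,d)\in\mathcal A$ enters: by \eqref{censsc3} there exist $j\neq i_1^x=i_1^{x'}$ and $w\in \bar B(x,d)\cap E_j$, so the $A_k$-representative $y_w$ of the cylinder of generation $k$ containing $w$ belongs to $L$ and therefore to $\bar B(x',d')$, while $i_1^{y_w}=j\neq i_1^{x'}$. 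This verification is the genuine new ingredient compared with the open-ball analogue used for the packing measure; without the enlargement condition built into the definition of $\mathcal{A}$ it could fail, and this is what I regard as the main obstacle in the proof.
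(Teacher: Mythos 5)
Your proposal is correct and follows essentially the same route as the paper's proof: the same choice of $x'$ as the $A_k$-representative of $E_{\mathbf{i}_k^x}$, the same set $L$ and radius $d'=\max\{|x'-y|:y\in L\}$, the same triangle-inequality argument for (ii), the same mass-transport inequality $\mu(\bar B(x,d))\leq\mu_k(L)\leq\mu_k(\bar B(x',d'))$ for (iii), and the same use of the defining condition of $\mathcal{A}$ to produce a point of $A_k$ in $\bar B(x',d')$ with first letter different from $i_1^{x'}$, which is exactly how the paper verifies $d'\in\bar D_k^{x'}$ via \eqref{condi}.
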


\begin{proof}
Let $k\in \mathbb{N}^{+}$ and $(x,d)\in \mathcal{A}$ (see \eqref{censsc3}
for the notation). Take the unique point $x^{\prime }\in A_{k}$ such that $%
E_{\mathbf{i}_{k}^{x}}=E_{\mathbf{i}_{k}^{x^{\prime }}}$. Then (i) holds.
Now set $L:=\{y\in A_{k}:E_{\mathbf{i}_{k}^{y}}\cap \bar{B}(x,d)\neq
\emptyset \}$\ and $d^{\prime }:=\max \{|y-x^{\prime }|:y\in L\}$.

Observe that, by definition of $\mathcal{A}$, there exists $\mathbf{j}_k \in
M^k$ with $j_1\ne i_1^x$ and $\bar{B}(x,d) \cap E_{\mathbf{j}_{k}}\ne
\emptyset $. Moreover, taking $z \in A_k$ such that $E_{\mathbf{i}%
_{k}^{z}}=E_{\mathbf{j}_{k}}$, we obtain that $z\in L$ whence $%
|z-x^{\prime}|\le d^{\prime}$. This, in turn, implies that $d^{\prime} \in
\bar{D}_{k}^{x^{\prime }}$ because $z\in \bar{B}(x^{\prime},d^{\prime})\cap
A_k$ and $i_1^{x^{\prime}}=i_1^x \ne j_1=i_1^z$ (see \eqref{condi}).

The proof of (ii) follows from the triangle inequality, taking $t^{\prime
}\in L\cap \partial \bar{B}(x^{\prime },d^{\prime })$ and $t\in E_{\mathbf{i}%
_{k}^{t^{\prime }}}\cap \bar{B}(x,d)$:
\begin{equation*}
d^{\prime }=|x^{\prime }-t^{\prime }|\leq
|x^{\prime}-x|+|x-t|+|t-t^{\prime}|\leq d+2Rr_{\max }^{k}.
\end{equation*}

Finally, (iii) holds because $L\subset \bar{B}(x^{\prime},d^{\prime })\cap
A_{k}$ and hence
\begin{equation*}
\mu (\bar{B}(x,d))\leq \mu (\cup _{\mathbf{i}_{k}\in M^{k}}\{E_{_{\mathbf{i}%
_{k}}}:E_{_{\mathbf{i}_{k}}}\cap \bar{B}(x,d)\neq \emptyset \})=\mu
_{k}(L)\leq \mu_{k}(\bar{B}(x^{\prime },d^{\prime })).
\end{equation*}
\end{proof}

\begin{lemma}
\label{aproxballc2} Given $(x,d)\in A_{k}\times \bar{D}_{k}^{x}$, there
exists $d^{\prime }\in \lbrack c,R]$ with $(x,d^{\prime }) \in \mathcal{A},$
and such that

\begin{enumerate}
\item[(i)] $d^{\prime }\leq d+Rr_{\max }^{k}$ and

\item[(ii)] $\mu _{k}(\bar{B}(x,d))\leq \mu (\bar{B}(x,d^{\prime }))$.
\end{enumerate}
\end{lemma}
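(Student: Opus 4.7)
The plan is to mirror the strategy of Lemma~\ref{aproxball2} but in the opposite direction: rather than shrinking the radius, here I enlarge it so that the new ball contains all the cylinder sets $E_{\mathbf{i}_{k}^{y}}$ attached to the points $y \in A_{k} \cap \bar{B}(x,d)$ that contribute to $\mu_{k}(\bar{B}(x,d))$.

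First I would define
\begin{equation*}
d^{\prime}:=\max\bigl\{|x-z|:z\in E_{\mathbf{i}_{k}^{y}}\text{ for some }y\in A_{k}\cap\bar{B}(x,d)\bigr\},
\end{equation*}
which exists because the union $\bigcup_{y\in A_{k}\cap\bar B(x,d)}E_{\mathbf{i}_{k}^{y}}$ is a finite union of compact sets and is nonempty (it contains at least the point $x$ itself, and, by $d\in \bar D_{k}^{x}$ together with \eqref{condi}, also some $z\in A_{k}\cap\bar B(x,d)$ with $i_{1}^{z}\neq i_{1}^{x}$).

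Next, for (i), let $y\in A_{k}\cap\bar{B}(x,d)$ and $z\in E_{\mathbf{i}_{k}^{y}}$ realize $d^{\prime}$. Since $y\in E_{\mathbf{i}_{k}^{y}}$ and $|E_{\mathbf{i}_{k}^{y}}|=r_{\mathbf{i}_{k}^{y}}R\leq Rr_{\max}^{k}$, the triangle inequality gives $d^{\prime}=|x-z|\leq|x-y|+|y-z|\leq d+Rr_{\max}^{k}$. To verify $(x,d^{\prime})\in\mathcal A$, I would take the point $z\in A_{k}\cap\bar B(x,d)$ with $i_{1}^{z}\neq i_{1}^{x}$ guaranteed by \eqref{condi}; then $|x-z|\geq c$ (by the SSC applied to $E_{i_{1}^{x}}$ and $E_{i_{1}^{z}}$) and $|x-z|\leq d^{\prime}$, so $c\leq d^{\prime}$, while $d^{\prime}\leq R$ because $x$ and $z$ both lie in $E$. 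Finally, $z\in\bar{B}(x,d^{\prime})\cap E_{i_{1}^{z}}$ with $i_{1}^{z}\neq i_{1}^{x}$ establishes membership in $\mathcal A$.

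For (ii) I would observe that, by the choice of $d^{\prime}$, every cylinder $E_{\mathbf{i}_{k}^{y}}$ with $y\in A_{k}\cap\bar{B}(x,d)$ is contained in $\bar{B}(x,d^{\prime})$. Since the cylinders of generation $k$ are pairwise disjoint under the SSC and $\mu(E_{\mathbf{i}_{k}^{y}})=r_{\mathbf{i}_{k}^{y}}^{s}$, summing over $y\in A_{k}\cap\bar{B}(x,d)$ yields
\begin{equation*}
\mu(\bar{B}(x,d^{\prime}))\geq\sum_{y\in A_{k}\cap\bar{B}(x,d)}r_{\mathbf{i}_{k}^{y}}^{s}=\mu_{k}(\bar{B}(x,d)).
\end{equation*}
The main subtlety is verifying $d^{\prime}\in[c,R]$ so that $(x,d^{\prime})\in\mathcal{A}$ is a legitimate pair; this is where the defining property of $\bar{D}_{k}^{x}$ (existence of a nearby point with a different initial letter) and the SSC are essential, while the upper bound $d^{\prime}\leq R$ is immediate from $E\subset\bar{B}(x,R)$.
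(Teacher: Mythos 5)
Your proof is correct and follows essentially the same route as the paper's: enlarge the radius so that the ball contains the generation-$k$ cylinders counted by $\mu _{k}(\bar{B}(x,d))$, bound the enlargement by a cylinder diameter via the triangle inequality, and use \eqref{condi} together with the SSC to certify $(x,d^{\prime })\in \mathcal{A}$. The only (harmless) difference is that the paper takes the maximum over all cylinders meeting $\bar{B}(x,d)$, whereas you use only those whose $A_{k}$-representative lies in the ball; this subfamily still dominates $\mu _{k}(\bar{B}(x,d))$, so the argument goes through unchanged.
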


\begin{proof}
Let $P:=\{\mathbf{i}_{k}\in M^{k}:E_{\mathbf{i}_{k}}\cap \bar{B}(x,d)\neq
\emptyset \}$ and $L:=\bigcup_{\mathbf{i}_{k}\in P} E_{\mathbf{i}_{k}}$. Set
$d^{\prime }:=\max \{|y-x|:y\in L\}$. By definition $L\subset \bar{B}
(x,d^{\prime })\cap E$. Thus
\begin{equation*}
\mu _{k}(\bar{B}(x,d))\leq \mu _{k}(\{y\in A_{k}:E_{\mathbf{i}_{k}^{y}}\cap
\bar{B}(x,d)\neq \emptyset \})=\mu (L)\leq \mu (\bar{B}(x,d^{\prime }))
\end{equation*}
which proves (ii). The proof of (i) follows by taking $y\in L\cap \partial
\bar{B}(x,d^{\prime })$ and $z\in E_{\mathbf{i}_{k}^{y}}\cap \bar{B}(x,d)$
and applying the triangle inequality:
\begin{equation*}
d^{\prime }=|x-y|\leq |x-z|+|z-y|\leq d+Rr_{\max }^{k}.
\end{equation*}
Finally, $\eqref{condi}$ implies the existence of $z \in A_k \cap \bar{B}%
(x,d)$ with $i_{1}^{z} \ne i_1^x$. This proves $(x,d^{\prime}) \in \mathcal{A%
}$ since $E_{\mathbf{i}_k^z} \in L$ and $B(x,d^{\prime})\cap E_{\mathbf{i_k}%
^z} \ne \emptyset $
\end{proof}

We are now ready to prove our main result for the centered Hausdorff measure.

\begin{proof}[Proof of Theorem~\protect\ref{mainc}]
Suppose first that $\tilde{m}_{k}\geq C^{s}(E)$. Let $(x,d)\in \mathcal{A}$
be such that $C^{s}(E)=\frac{(2d)^{s}}{\mu (\bar{B}(x,d))}$ (see %
\eqref{censsc2}) and take $(x^{\prime},d^{\prime })\in A_{k}\times \bar{D}%
_{k}^{x^{\prime}}$ as in Lemma~\ref{aproxballc}. Then \eqref{seal}, (ii) and
(iii) of Lemma~\ref{aproxballc}, \eqref{bd0}, and the mean value theorem
imply
\begin{eqnarray*}
\tilde{m}_{k}-C^{s}(E) &\leq &\frac{(2d^{\prime })^{s}}{\mu _{k}(\bar{B}%
(x^{\prime },d^{\prime }))}-\frac{(2d)^{s}}{\mu (\bar{B}(x,d))} \\
&\leq &2^{s}\frac{(d^{\prime })^{s}-d^{s}}{\mu _{k}(\bar{B}(x^{\prime
},d^{\prime }))}\leq \frac{s2^{s+1}Q}{r_{\min }^{qs}}Rr_{\max }^{k},
\end{eqnarray*}
where $Q$ is as in \eqref{ctec}.

Finally, if $\tilde{m}_{k}\leq C^{s}(E)$, \eqref{seal}, the mean value
theorem, and Lemma~\ref{aproxballc2} with $(x,d)\in A_{k}\times \bar{D}%
_{k}^{x}$ such that $\tilde{m}_{k}=\frac{(2d)^{s}}{\mu _{k}(\bar{B}(x,d))}$,
imply
\begin{gather}
C^{s}(E)-\tilde{m}_{k}\leq \frac{(2d^{\prime })^{s}}{\mu (\bar{B}%
(x,d^{\prime }))}-\frac{(2d)^{s}}{\mu _{k}(\bar{B}(x,d))}  \notag \\
\leq 2^{s}\frac{(d^{\prime })^{s}-d^{s}}{\mu _{k}(\bar{B}(x,d^{\prime }))}%
\leq s2^{s}\mathcal{Q}R\frac{r_{\max }^{k}}{r_{\min }^{qs}},  \notag
\end{gather}
where $Q$ is as in \eqref{ctec} and $d^{\prime }$ is given by Lemma~\ref%
{aproxballc2}.
\end{proof}

\subsection{Examples\label{HCex}}

As in Section~\ref{packex}, we now analyze the examples studied in \cite%
{[LLM3]} taking into account Theorem~\ref{mainc}. We observe that Theorem~%
\ref{mainc} gives an automated tool for proving the conjectures on the
values of $C^{s}$ given in \cite{[LLM3]}. Let $I_{k}$ be the closed interval
$I_{k}:=[\tilde{m}_{k}-\epsilon_{k},\tilde{m}_{k}+\epsilon_{k}]$ where $%
\tilde{m}_{k}$ and $\epsilon_{k}$ are defined in Theorem~\ref{mainc}.

\begin{enumerate}
\item \textbf{Cantor type sets in the real line.}

Let $\left\{ C_{r}\right\} _{r \in (0,\frac{1}{2})}$ be the family of linear
Cantor set defined by \eqref{canreal}. In \cite{[ZHZH]} it is proved that if
$0<r\leq \frac{1}{3}$, then
\begin{equation}
C^{s}(C_{r})=g_{3}(r)\text{ where }g_{3}(r):=2^{s}(1-r)^{s},
\label{centred1}
\end{equation}
and $s=-\frac{\log 2}{\log r}$ is the similarity dimension of $\{C_{r}\}$.

Table~\ref{Cantrtabc} records the results obtained from applying Theorem~\ref%
{mainc} in combination with Algorithm~\ref{algocent} to the family $%
\{C_{r}\} $. As in Section~\ref{packex}, the examples are chosen to
illustrate the behavior of $C^{s}(C_{r})$ with respect to \eqref{centred1}.

\begin{table}[]
\begin{center}
\begin{tabular}{|c|c|c|c|c|c|c|c|c|}
\hline
$\mathbf{r\leq \frac{1}{3}}$ & $\mathbf{q}$ & $\mathbf{s}$ & $\mathbf{%
k_{\max }}$ & $\mathbf{\mathcal{Q}}$ & $\mathbf{\tilde{m}_{k_{\max}}=
C^{s}(C_{r})=g_{3}(r)} $ & $\mathbf{\epsilon _{k_{\max }}}$ & $\mathbf{%
g_{3}(r)}$ & $\mathbf{k_{stb}}$ \\ \hline
$\frac{1}{4}$ & $1$ & $\frac{\log 2}{\log 4}$ & $20$ & $1.41421$ & $%
1.224744871392$ & $3.63798\times 10^{-12}$ & $1.224744871392$ & $2$ \\ \hline
$\frac{1}{3}$ & $1$ & $\frac{\log 2}{\log 3}$ & $20$ & $1.5$ & $%
1.199023144561$ & $1.68126\times 10^{-9}$ & $1.199023144561$ & $3$ \\ \hline
$\mathbf{r>\frac{1}{3}}$ &  &  &  &  & $\mathbf{\tilde{m}_{k_{\max}}}$, \ $%
\mathbf{g_{3}(r)\in I_{k_{\max }}}$ &  &  &  \\ \hline
$0.351$ & $2$ & $-\frac{\log 2}{\log 0.351}$ & $20$ & $1.50552$ & $%
1.188484857299$ & $1.01650\times 10^{-8}$ & $1.188484857299$ & $4$ \\ \hline
$0.3518$ & $2$ & $-\frac{\log 2}{\log 0.3518}$ & $20$ & $1.50562$ & $%
1.187959585122$ & $1.06731\times 10^{-8}$ & $1.187959585122$ & $4$ \\ \hline
&  &  &  &  & $\mathbf{\tilde{m}}_{k_{\max }},$ $g_{3}(r)\notin \emph{I}
_{k_{\max }}$ &  &  &  \\ \hline
$0.3519$ & $2$ & $-\frac{\log 2}{\log 0.3519}$ & $20$ & $1.50563$ & $%
1.187703097489$ & $1.07383\times 10^{-8}$ & $1.187893625780$ & $4$ \\ \hline
$0.4$ & $2$ & $-\frac{\log 2}{\log 0.4}$ & $20$ & $1.47986$ & $%
1.084545262462 $ & $1.66350\times 10^{-7}$ & $1.147884787390$ & $7$ \\ \hline
$0.45$ & $3$ & $-\frac{\log 2}{\log 0.45}$ & $21$ & $1.35502$ & $%
1.031518332488$ & $1.79220\times 10^{-6}$ & $1.086253162423$ & $19$ \\ \hline
\end{tabular}
\label{Cantrtabc}
\end{center}
\caption{Linear Cantor sets $\{C_{r}\}$}
\end{table}

As a consequence of these results we obtain the bounds:
\begin{eqnarray}
\mathbf{1.2247448713}87 &<&C^{s}(C_{\frac{1}{4}})<\mathbf{1.2247448713}96,
\notag \\
\mathbf{1.19902314}28 &<&C^{s}(C_{\frac{1}{3}})<\mathbf{1.19902314}63,
\notag \\
\mathbf{1.1884848}47 &<&C^{s}(C_{0.351})<\mathbf{1.1884848}68, \\
\mathbf{1.1879595}74 &<&C^{s}(C_{0.3518})<\mathbf{1.1879595}96,  \notag \\
\mathbf{1.187703}08 &<&C^{s}(C_{0.3519})<\mathbf{1.187703}11,  \label{CC35}
\\
\mathbf{1.084545}09 &<&C^{s}(C_{\frac{4}{10}})<\mathbf{1.084545}43,
\label{CC410} \\
\mathbf{1.0315}16 &<&C^{s}(C_{0.45})<\mathbf{1.0315}21.  \label{CC045}
\end{eqnarray}

The algorithm recovers the value given by \eqref{centred1} in the cases
where $r\leq 1/3$. To our knowledge, there is no general formula for $%
C^{s}(C_{r})$ when $r>\frac{1}{3}$, but in these cases we obtain estimates
of its value to accuracies of $7$, $6$, and $4$ decimal places, namely, $%
C^{s}(C_{0.351})\simeq 1.1884848$, $C^{s}(C_{0.3518})\simeq 1.1879595$,$%
C^{s}(C_{0.3519})\simeq 1.187703$, $C^{s}(C_{\frac{4}{10}})\simeq 1.084545$,
and $C^{s}(C_{0.45})\simeq 1.0315$. Moreover, as in Section~\ref{packex}, %
\eqref{CC35}, \eqref{CC410}, and \eqref{CC045} show that \eqref{centred1}
is, in general, not valid when $r>\frac{1}{3}$. In fact, $g_{3}(r)$ is an
upper bound for $C^{s}(C_{r})$.

\item \textbf{Sierpinski gaskets}. Let $S_{r}$ be the class of Sierpinski
gaskets defined by \eqref{sierclas} and let
\begin{equation*}
g_{4}(r):=\left[ 2(1-r)(r^{2}+r+1)^{\frac{1}{2}}\right]^{s}
\end{equation*}
where $s=-\frac{\log (3)}{\log (r)}$. The results obtained in \cite{[LLM3]}
for this class of Sierpinski gaskets led to the conjecture
\begin{equation}
C^{s}(S_{r})=g_{4}(r)\qquad \text{for all}\,\, r<0.25.  \label{packsierconj}
\end{equation}

In Table~\ref{siertc} we can see that, even when \eqref{packsierconj} was
conjectured for $r<0.25$, the algorithm output approximates the value given
by \eqref{packsierconj} in all the cases where $r\leq 0.277$ (with accuracy
of more than $12$ decimal places). For the cases where $r\geq 0.278$ we
observe the opposite behavior, $g_{4}(r)\notin \emph{I}_{k_{\max }}$, so %
\eqref{packsierconj} cannot hold, and $g_{4}(r)$ is an upper bound for $%
C^{s}(S_{r})$.
\begin{table}[]
\begin{center}
\begin{tabular}{|c|c|c|c|c|c|c|c|c|}
\hline
$\mathbf{r}$ & $\mathbf{s}$ & $\mathbf{q}$ & $\mathbf{k_{\max }}$ & $\mathbf{%
\mathcal{Q}}$ & $\mathbf{\tilde{m}_{k_{\max }}},$ $\mathbf{g_{4}(r)\in \emph{%
I}_{k_{\max }}}$ & $\mathbf{\epsilon_{k_{\max }}}$ & $\mathbf{g_{4}(r)}$ & $%
\mathbf{k_{stb}}$ \\ \hline
$1/27$ & $\frac{\log 3}{\log 27}$ & $1$ & $10$ & $1.05265$ & $1.252010347930$
& $1.28830\times 10^{-14}$ & $1.252010347930$ & $3$ \\ \hline
$0.2$ & $-\frac{\log 3}{\log 0.2}$ & $1$ & $12$ & $1.17602$ & $%
1.483264747602 $ & $3.16650\times 10^{-8}$ & $1.483264747602$ & $3$ \\ \hline
$1/4$ & $\frac{\log 3}{\log 4}$ & $1$ & $12$ & $1.15470$ & $1.535835728296$
& $5.66827\times 10^{-7}$ & $1.535835728296$ & $3$ \\ \hline
$0.277$ & $-\frac{\log 3}{\log 0.277}$ & $1$ & $12$ & $1.12349$ & $%
1.560819225967$ & $2.13042\times 10^{-6}$ & $1.560819225967$ & $>k_{\max }$
\\ \hline
&  &  &  &  & $\mathbf{\tilde{m}_{k_{\max }}},$ \ $\mathbf{g_{4}(r)\notin
I_{k_{\max }}}$ &  &  &  \\ \hline
$0.278$ & $-\frac{\log 3}{\log 0.278}$ & $1$ & $12$ & $1.12202$ & $%
1.561597393347$ & $2.23163\times 10^{-6}$ & $1.561690520340$ & $5$ \\ \hline
$1/3$ & $1$ & $1$ & $13$ & $1$ & $1.543702825201$ & $7.52671\times 10^{-6}$
& $1.602467233540$ & $>k_{\max }$ \\ \hline
$0.4$ & $-\frac{\log 3}{\log 0.4}$ & $2$ & $12$ & $1$ & $1.472023977311$ & $%
8.31250\times 10^{-4}$ & $1.624473448850$ & $>k_{\max }$ \\ \hline
\end{tabular}%
\end{center}
\caption{Sierpinski gaskets $\{S_{r}\}$}
\label{siertc}
\end{table}

Next we give the bounds provided by Theorem~\ref{mainc}:
\begin{eqnarray*}
\mathbf{1.2520103479303}3 &<&C^{s}(S_{\frac{1}{27}})<\mathbf{1.2520103479303}%
5, \\
\mathbf{1.4832647}15 &<&C^{s}(S_{\frac{2}{10}})<\mathbf{1.4832647}80, \\
\mathbf{1.53583}516 &<&C^{s}(S_{\frac{1}{4}})<\mathbf{1.53583}630, \\
\mathbf{1.5608}170 &<&C^{s}(S_{0.277})<\mathbf{1.5608}214, \\
\mathbf{1.56159}516 &<&C^{s}(S_{0.278})<\mathbf{1.56159}963, \\
\mathbf{1.543}695 &<&C^{1}(S_{\frac{1}{3}})<\mathbf{1.543}711, \\
\mathbf{1.47}119 &<&C^{s}(S_{\frac{4}{10}})<\mathbf{1.47}286.
\end{eqnarray*}

Observe that the above bounds suffice to prove the conjectural values
proposed in \cite{[LLM3]}. In particular, the bounds obtained for the case $%
C^{1}(S_{1/3})$ prove the conjecture $C^{1}(S_{1/3})\simeq 1.543$. We would
like to clarify that there was a minor error in \cite{[LLM3]} as, according
to the algorithm's output in this case, the correct conjectured value is $%
1.543$ and not $1.537$ as was written in \cite{[LLM3]}.

\item \textbf{Planar Cantor type sets }$K_{r}$.

Let $\left\{ K_{r}\right\} _{r\in (0,\frac{1}{2})}$ be the family of planar
Cantor type sets defined by \eqref{planrcantor} and $s=-\frac{\log 4}{\log r}
$. In \cite{[ZHZH1]} it is shown that
\begin{equation}
C^{s}(K_{r})=g_{5}(r)\text{ where }g_{5}(r):=\left( 2\sqrt{2}(1-r)\right)^{s}
\label{zhzhform}
\end{equation}
whenever $s\in (0,1)$, $(1-r)r^{\frac{2s-1}{1-s}}\geq 2$, and $\frac{3r^{s}}{
(1-r)^{s}}\leq 2^{-\frac{s}{2}}$.

These conditions hold for $r<r_{0}$ with $r_{0}\simeq 0.10832764$.

Table~\ref{canptc} shows the outcomes obtained applying Theorem~\ref{mainc}
together with Algorithm~\ref{algocent} to the family $K_{r}$. Observe that
the algorithm recovers the value given by \eqref{zhzhform} when $r<r_{0}$.
In the examples in Table~\ref{canptc} where \eqref{zhzhform} cannot be
rejected, $\mathbf{\tilde{m}}_{k}$ and $g_{5}(w)$ coincide to at least
twelve decimal places. For $r>0.17$, \eqref{zhzhform} does not hold true and
$g_{5}(r)$ is an upper bound for $C^{s}(K_{r})$.
\begin{table}[]
\begin{center}
\begin{tabular}{|c|c|c|c|c|c|c|c|c|}
\hline
$\mathbf{r<r_{0}}$ & $\mathbf{s}$ & $\mathbf{q}$ & $\mathbf{k_{\max}}$ & $%
\mathbf{\mathcal{Q}}$ & $\mathbf{\tilde{m}}_{k}=\mathbf{C}^{s}\mathbf{%
(K_{r})=g_{5}(r)}$ & $\mathbf{\epsilon_{k}}$ & $\mathbf{g_{5}(r)}$ & $%
\mathbf{k_{stb}}$ \\ \hline
$0.01$ & $\frac{\log 4}{\log 100}$ & $1$ & $7$ & $1.01422$ & $1.363372877653$
& $4.25566 \times 10^{-14}$ & $1.363372877653$ & $2$ \\ \hline
$0.05$ & $-\frac{\log 4}{\log 0.05}$ & $1$ & $10$ & $1.05824$ & $%
1.579962585475$ & $7.45665 \times 10^{-13}$ & $1.579962585475$ & $2$ \\
\hline
$0.1$ & $\frac{\log 4}{\log 10}$ & $1$ & $10$ & $1.09286$ & $1.755126487784$
& $1.12992 \times 10^{-9}$ & $1.755126487784$ & $2$ \\ \hline
$\mathbf{r\geq r}_{0}$ &  &  &  &  & $\mathbf{\tilde{m}}_{k},$ $\mathbf{%
g_{5}(r)\in I_{k_{\max }}}$ &  &  &  \\ \hline
$1/8$ & $\frac{\log 4}{\log 8}$ & $1$ & $10$ & $1.10064$ & $1.829652855011$
& $1.22729 \times 10^{-8}$ & $1.829652855011$ & $2$ \\ \hline
$0.16$ & $-\frac{\log 4}{\log 0.16}$ & $1$ & $10$ & $1.09847$ & $%
1.924421022097$ & $1.74625 \times 10^{-7}$ & $1.924421022097$ & $3$ \\ \hline
&  &  &  &  & $\mathbf{\tilde{m}_{k}}, \ \mathbf{g_{5}(r)\notin I_{k_{\max }}%
}$ &  &  &  \\ \hline
$0.17$ & $-\frac{\log 4}{\log 0.17}$ & $1$ & $10$ & $1.09465$ & $%
1.946542971745$ & $3.35957 \times 10^{-7}$ & $1.949655110042$ & $3$ \\ \hline
$0.2$ & $-\frac{\log 4}{\log 0.2}$ & $1$ & $10$ & $1.07339$ & $%
1.978683424694 $ & $1.94598 \times 10^{-6}$ & $2.020532989127$ & $3$ \\
\hline
$1/4$ & $1$ & $1$ & $10$ & $1$ & $1.954277821708$ & $2.157919 \times 10^{-5}$
& $2.121320343560$ & $9$ \\ \hline
$0.4$ & $-\frac{\log 4}{\log 0.4}$ & $3$ & $11$ & $1.19455$ & $%
1.650343901758 $ & $3.91609 \times 10^{-2}$ & $2.225958183662$ & $>k_{\max}$
\\ \hline
\end{tabular}%
\end{center}
\caption{Planar Cantor type sets $\left\{ K_{r}\right\}$}
\label{canptc}
\end{table}
\qquad \qquad

The bounds provided by Theorem~\ref{mainc} are:
\begin{eqnarray}
\mathbf{1.363372877652}81 &<&C^{s}(K_{\frac{1}{100}})<\mathbf{1.363372877652}%
91,  \notag \\
\mathbf{1.57996258547}383 &<&C^{s}(K_{0.05})<\mathbf{1.57996258547}533,
\notag \\
\mathbf{1.75512648}665 &<&C^{s}(K_{\frac{1}{10}})<\mathbf{1.75512648}892,
\notag \\
\mathbf{1.8296528}427 &<&C^{s}(K_{\frac{1}{8}})<\mathbf{1.8296528}673,
\notag \\
\mathbf{1.92442}084 &<&C^{s}(K_{0.16})<\mathbf{1.92442}120,  \notag \\
\mathbf{1.9465}426 &<&C^{s}(K_{0.17})<\mathbf{1.9465}434,  \notag \\
\mathbf{1.97868}147 &<&C^{s}(K_{0.2})<\mathbf{1.97868}538,  \notag \\
\mathbf{1.954}256 &<&C^{s}(K_{\frac{1}{4}})<\mathbf{1.954}300,  \notag \\
\mathbf{1.6}1118 &<&C^{s}(K_{0.4})<\mathbf{1.6}8951.  \label{can14bdd} \\
\end{eqnarray}

Finally, we remark that the rate of convergence given by Theorem~\ref{mainc}
provides an estimate of $C^{s}(K_{r})$ for the cases where we do not have a
general formula with accuracy that varies from seven decimal places to one
decimal places. Actually, in \cite{[LLM3]}, $1.95$ was proposed as a
conjectural value for $C^{s}(K_{\frac{1}{4}})$. Now, thanks to Theorem~\ref%
{mainc}, we have proved this conjecture, for, by \eqref{can14bdd} we have $%
C^{s}(K_{\frac{1}{4}})\simeq 1.954.$
\end{enumerate}

\subsection{The general case}

We present below Table \ref{gnrc} in order to illustrate the general case
(see Section \ref{general case packing}) for the centered Hausdorff measure.
It gives the results for all iterations in the computation of $%
C^{s}(C_{0.45})$ with $C_{0.45}$ being the central Cantor set in the line
with contraction ratio $0.45$.

The columns in the table are: the iteration $k$, the center $x$ and the end
point $y$ of the optimal ball, the radius $d=\left\vert x-y\right\vert $ of
the optimal ball, the estimate $\tilde{m}_{k}$ of $C^{s}(C_{0.45})$ at the $%
k $th iteration, and the interval $I_{k}$ to which we can be sure that $%
C^{s}(C_{0.45})$ belongs. Again, for simplicity all the values reported in
the table are rounded to eight decimal places.
\begin{table}[]
\begin{center}
\begin{tabular}{|c|c|c|c|c|c|}
\hline
$\mathbf{k}$ & $\mathbf{x}$ & $\mathbf{y}$ & $\mathbf{d}$ & $\mathbf{\tilde{m%
}}_{k}$ & $\mathbf{I}_{k}$ \\ \hline
$5$ & $0.55$ & $0.091125$ & $0.458875$ & $1.02422358$ & $%
(0.39037468,1.65807248)$ \\ \hline
$6$ & $0.55$ & $0.111375$ & $0.438625$ & $1.03859290$ & $(0.75336089,\
1.32382491)$ \\ \hline
$7$ & $0.56014905$ & $0.11967877$ & $0.44047028$ & $1.03299380$ & $%
(0.90463940,1.16134821)$ \\ \hline
$8$ & $0.44626331$ & $0.00373669$ & $0.44252661$ & $1.03252769$ & $%
(0.97476821,\ 1.09028718)$ \\ \hline
$9$ & $0.55662225$ & $0.11305651$ & $0.44356574$ & $1.03231740$ & $%
(1.00632562,\ 1.05830917)$ \\ \hline
$10$ & $0.55549190$ & $1$. & $0.44450810$ & $1.03191238$ & $(1.02021608,\
1.04360868)$ \\ \hline
$11$ & $0.55549190$ & $0.99965949$ & $0.44416759$ & $1.03180195$ & $%
(1.02653862,\ 1.03706529)$ \\ \hline
$12$ & $0.55567918$ & $1.00$ & $0.44432082$ & $1.03153497$ & $(1.02916647,\
1.03390348)$ \\ \hline
$13$ & $0.55567918$ & $1$ & $0.44432082$ & $1.03153497$ & $(1.03046914,\
1.03260080)$ \\ \hline
$14$ & $0.55567918$ & $1.0$ & $0.44432082$ & $1.03153497$ & $(1.03105535,\
1.03201460)$ \\ \hline
$15$ & $0.44430686$ & $0.888625$ & $0.44431814$ & $1.03152958$ & $%
(1.03131375,\ 1.03174542)$ \\ \hline
$16$ & $0.55568686$ & $1$. & $0.44431314$ & $1.03151950$ & $(1.03142237,\
1.03161663)$ \\ \hline
$17$ & $0.55568686$ & $1$ & $0.44431314$ & $1.03151950$ & $(1.03147579,\
1.03156321)$ \\ \hline
$18$ & $0.44431187$ & $0.888625$ & $0.44431313$ & $1.03151949$ & $%
(1.03149981,\ 1.03153916)$ \\ \hline
$19$ & $0.44431244$ & $0.888625$ & $0.44431256$ & $1.03151833$ & $%
(1.031509482,1.03152719)$ \\ \hline
$20$ & $0.44431244$ & $0.888625$ & $0.44431256$ & $1.03151833$ & $%
(1.03151434,1.03152232)$ \\ \hline
$21$ & $0.44431244$ & $0.888625$ & $0.44431256$ & $1.03151833$ & $%
(1.03151654,1.03152013)$ \\ \hline
\end{tabular}%
\end{center}
\caption{ Central Cantor set in the line $C_{0.45}$ }
\label{gnrc}
\end{table}

One can see that the values continue changing up to the limit of our
computational power.

\section{Conclusions\protect\bigskip \label{conclusions}}

Research on the computability of metric measures on self-similar sets
started in \cite{[MO]}, ten years ago. The general method for the
computation of metric measures was established in that paper, and a
discussion on the computability of the existing metric measures was
initiated. In the introduction of that paper one can read:


``The exhaustive class of coverings used by this (the Hausdorff) measure
gives it a special place as the smallest among all measures based on
coverings. The price to be paid for such privilege is that the research on
the exact Hausdorff measure of a self-similar set leads, with few
exceptions, to the computation of bounds, but the exact Hausdorff measure of
a self-similar set will remain unknown for some time.''

In regard to the computation of the packing measure, there is written in
\cite{[MO]}:

``Another consequence of these results is that the packing measure could be
easier than the Hausdorff measure from a computational point of view . . .
since the search for sets of optimal density is restricted to balls centered
at $E$.''

These predictions have been confirmed in the subsequent literature gathered
in the bibliographical references below. Some results, already discussed
earlier, on the exact packing and centered Hausdorff measures have been
obtained using geometric methods, but the list of known results on Hausdorff
measure has not grown significantly. The predictions in \cite{[MO]} have
also been confirmed by the work on the computability of metric measures
reported in \cite{[LLM1]}, \cite{[LLM2]}, \cite{[LLM3]}, \cite{[LLM4]}, and,
in particular, by the results in the present paper, that permit a more
detailed discussion of the frontiers of the computability of metric
measures. Furthermore, something new can be added to the early expectations:
by results in \cite{[LLM2]}, \cite{[LLM3]}, and the present paper, the
centered Hausdorff measure and the packing measure can be added to the list
of ``computable'' metric measures. This is fortunate, since a covering and a
packing based measure are available for computation, at least for
self-similar sets satisfying the SSC. As pointed out in Remark~\ref{rough
idea}, this also gives additional valuable information on the spectrum of
densities of $\mu$.

The above conclusions motivate a call to revisit well-established folklore
on the topic of metric measures. Because of the double step definitions
required for the packing and centered Hausdorff measures, it is a common
opinion that these measures are too awkward to handle. However, in the
setting of self-similar sets satisfying the OSC (and so also the SSC), the
second step in the definitions of $P^{s}$ and $C^{s}$ can be omitted (and,
in the case of the packing measure, it can be omitted also in the more
general setting of compact sets with finite packing measure). As shown in
the present paper, one can take advantage of this fact for computational
purposes, and it is now clear that the measures $P^{s}$ and $C^{s}$ will
play a relevant role in the future, at least in computational issues.

We now discuss the frontiers of the computability of metric measures. At the
present, two metric measures, $C^{s}$ and $P^{s}$, can be computed with the
accuracy necessary for potential technical applications only in the case of
self-similar sets satisfying the SSC and having small contraction ratios. In
these cases, if the contraction ratios are small enough the results in this
paper indicate that an optimal ball might exist, and a formula giving the
exact value of the corresponding metric measure might be found by
theoretical methods. Moreover, our algorithm is an efficient tool for
identifying what might be the optimal ball, if the algorithm stabilizes at
an early iteration. On the other hand, if the contraction ratios are large,
then any attempted theoretical approach could be doomed to failure, since
the problem of the calculation of $C^{s}$ and $P^{s}$ is essentially
computational.

The bounds on the maximum error provided by Theorems~\ref{main} and \ref%
{mainc} decrease exponentially with the number $k$ of iterations, but the
number of calculations grows at a much faster rate when $k$ increases, since
the number of feasible balls depends on the square of the number of points
in $A_{k}$, and this in turn grows exponentially with $k$. In this regard,
the reductions, obtained in \cite{[LLM3]} and \cite{[LLM4]}, of the families
of balls that need to be considered are crucial. However, if the contraction
radii increase, two things occur that can render accurate computation of $%
P^{s}$ and $C^{s}$ impossible. First it is necessary to use smaller balls,
since the minimum separation distance $c$ between the basic cylinders
decreases, so the number of balls to be explored increases. Second and more
important, in order to obtain good estimates of the $\mu$-measure of a ball,
it is necessary to go to an iteration $k$ for which the size of the
cylinders of the $k$th generation is small relative to the size of the ball,
and for this purpose we have to go to more advanced iterations for balls of
small size. On the other hand, if the contraction ratios are large, $k$ must
be taken to be still larger, so that the $k$-cylinders will be sufficiently
small. In this case the computability of $P^{s}$ and $C^{s}$ encounters
severe obstacles (see Remark~\ref{timeq}).

Although the theoretical methods proposed in \cite{[MO]} can, in principle,
be applied to self-similar sets satisfying the OSC, these cases are not yet
amenable to computation by our method. This is because the OSC can be viewed
as a limiting case, with the size of feasible balls going to zero (see \cite%
{[Ay]} for an example in the line with the OSC, where the exploration of
arbitrarily small balls must be undertaken in the search for a ball with
maximal density) and the argument given above for cases satisfying the SSC
and having large contraction ratios applies in an extreme form.

The computation of the spherical Hausdorff measure $H_{sph}^{s}$ could still
be feasible for the easier cases of small contraction ratios, but the
results will be much poorer than those for $P^{s}$ or $C^{s}$ for increasing
contraction ratios, because the class of balls centered at arbitrary points
in the ambient space, which is the covering class used in the definition of $%
H_{sph}^{s}$, is a much larger class than the covering classes used in the
definitions of $P^{s}$ and $C^{s}$. Moreover, the computation of the
Hausdorff measure is still unreachable by the argument given in \cite{[MO]}.

\begin{acknowledgement}
The computational part of this work was performed in EOLO, the HPC of
Climate Change of the International Campus of Excellence of Moncloa, funded
by the MECD and MICINN. This is a contribution to the CEI Moncloa.

Part of the present work has been done while Marta Llorente was visiting
Prof. Claude Tricot at the Mathematics department of the University Blaise
Pascal. The author is grateful to Prof. Claude Tricot for valuable comments,
fruitful discussions and hospitality.
\end{acknowledgement}

.

\end{document}